\definecolor{lightgray}{rgb}{0.9, 0.9, 0.9}
\definecolor{darkgray}{rgb}{0.7, 0.7, 0.7}
\definecolor{darkblue}{rgb}{0, 0, .4}
\theoremstyle{plain}
\newtheorem{theorem}{Theorem}[section]
\newtheorem{corollary}[theorem]{Corollary}
\newtheorem{lemma}[theorem]{Lemma}
\newtheorem{proposition}[theorem]{Proposition}
\newtheorem{question}[theorem]{Question}
\theoremstyle{definition}
\theoremstyle{remark}
\newcounter{todocounter}
\newfont{\footsc}{cmcsc10 at 8truept}
\newfont{\footbf}{cmbx10 at 8truept}
\newfont{\footrm}{cmr10 at 10truept}
\title{Grid Classes and Partial Well Order}
\author{Robert Brignall\footnote{This research was conducted while the author was being supported by the Heilbronn Institute for Mathematical Research.}\\[-3pt]
\small Department of Mathematics and Statistics\\[-3pt]
\small The Open University\\[-3pt]
\small Milton Keynes, UK\\[-3pt]
\small \texttt{r.brignall@open.ac.uk}\\[-3pt]
\small \texttt{http://users.mct.open.ac.uk/rb8599}}
\date{9th August 2011}
\begin{document}
\maketitle

\newcommand{\OEISlink}[1]{{#1}}
\newcommand{\OEISref}{{OEIS}~\cite{OEIS}}
\newcommand{\OEIS}[1]{(Sequence \OEISlink{#1} in the \OEISref.)}

\newcommand{\sub}[1]{\mathrm{Cl}(#1)}
\newcommand{\propsub}[1]{\mathrm{Pcl}(#1)}
\newcommand{\cl}[1]{\mathrm{Cl}(#1)}
\newcommand{\pcl}[1]{\mathrm{Pcl}(#1)}
\newcommand{\av}[1]{\mathrm{Av}(#1)}
\newcommand{\rect}{\mathrm{rect}}
\newcommand{\grid}{\mathrm{Grid}}
\newcommand{\gr}{\mathrm{gr}}
\newcommand{\A}{\mathcal{A}}
\newcommand{\B}{\mathcal{B}}
\newcommand{\C}{\mathcal{C}}
\newcommand{\D}{\mathcal{D}}
\newcommand{\F}{\mathcal{F}}
\renewcommand{\O}{\mathcal{O}}
\newcommand{\M}{\mathcal{M}}
\newcommand{\N}{\mathcal{N}}
\newcommand{\mm}{\mathrm{mm}}
\newcommand{\si}{\mathrm{Si}}


\begin{abstract}
We prove necessary and sufficient conditions on a family of (generalised) gridding matrices to determine when the corresponding permutation classes are partially well-ordered. One direction requires an application of Higman's Theorem and relies on there being only finitely many simple permutations in the only non-monotone cell of each component of the matrix. The other direction is proved by a more general result that allows the construction of infinite antichains in any grid class of a matrix whose graph has a component containing two or more non-monotone-griddable cells. The construction uses a generalisation of pin sequences to grid classes, together with a number of symmetry operations on the rows and columns of a gridding.
\end{abstract}

\section{Introduction}

A partial order is \emph{partially well-ordered} if it contains neither an infinite \emph{antichain} (a set of pairwise incomparable elements) nor an infinite descending chain. In the study of classes of combinatorial structures this latter condition is trivially satified, thus such a class is partially well-ordered if and only if it contains no infinite antichain. For many combinatorial structures we have only a quasi-ordering rather than a partial ordering, and in this case we call such a class \emph{well quasi-ordered} when it contains no infinite antichain. Celebrated results affirming well quasi-ordering in different contexts range from Kruskal's Tree Theorem~\cite{kruskal:well-quasi-ordering:} to the Robertson--Seymour Theorem~\cite{robertson:graph-minors-xx:} for minor-closed classes of graphs, but there are many known examples of quasi-orders that are not well quasi-ordered, such as hereditary properties of graphs. Higman's Theorem (reproduced here in Section~\ref{sec-pwo-grid-classes}) is one of the few general tools available to prove that a given quasi-order is well quasi-ordered, but attention has been given more recently to develop a general theory of infinite antichains --- see, for example, Gustedt~\cite{gustedt:finiteness-theo:} and Cherlin and Latka~\cite{cherlin:minimal-anticha:}.

In this paper we are concerned with permutations, though there is no particular reason why parts of these results cannot be extended to other structures. A sequence $a_1,\ldots, a_n$ of length $n$ of distinct real numbers is said to be \emph{order isomorphic} to another sequence $b_1,\cdots, b_n$ if, for all $i,j\in [n]=\{1,2,\ldots,n\}$, $a_i<a_j$ if and only if $b_i<b_j$. In this way every sequence of real numbers of length $n$ is order isomorphic to some permutation $\pi$ of length $n$: $a_i<a_j$ if and only if $\pi(i)<\pi(j)$. This order isomorphism induces the \emph{containment} ordering on permutations: we say that a permutation $\alpha$ is \emph{contained} in $\pi$, $\alpha\leq\pi$, if there is some subsequence of $\pi$ order isomorphic to $\alpha$. Such a subsequence of $\pi$ is called a \emph{copy} of $\alpha$ in $\pi$. Conversely, if $\pi$ does not contain the permutation $\beta$, then $\pi$ is said to \emph{avoid} $\beta$. For example, $\pi=918572346$ contains $51342$ because of the subsequence $91572$ ($=\pi(1)\pi(2)\pi(4)\pi(5)\pi(6)$), but avoids $3142$.

The containment ordering on permutations defines a partial order on the set of all permutations. A \emph{permutation class} is a set of permutations closed downward in this partial order, i.e.\ if $\pi$ is a permutation in the class $\C$ and $\alpha\leq \pi$, then $\alpha\in \C$. These classes have received a lot of attention in recent years, and the question of partial well-order has played a central role: there is a vast library of infinite antichains (see, in particular Murphy's thesis~\cite{murphy:restricted-perm:}), while Higman's Theorem has been applied in the other direction by Atkinson, Murphy and Ru\v{s}kuc~\cite{atkinson:partially-well-:} and Albert and Atkinson~\cite{albert:simple-permutat:}.

The traditional description of a class $\C$ is by the unique antichain $B$ that forms its \emph{basis}: we write $\C=\av{B}$ to mean $\C=\{\pi:\beta\not\leq\pi \textrm{ for all }\beta\in B\}$. However, in recent years a new description of permutation classes has arisen, namely ``grid classes'' of matrices whose entries are themselves permutation classes --- for formal definitions see Section~\ref{sec-definitions}. These have played a role in the development of the ``Fibonacci'' and ``Vatter'' dichotomies~\cite{huczynska:grid-classes-an:,vatter:small-permutat:}, providing a complete answer to the possible growth rates\footnote{All permutation classes have an \emph{upper growth rate}, $\overline{\gr}(\C)=\limsup_{n\rightarrow\infty}\sqrt[n]{|\C_n|}$ (see~\cite{marcus:excluded-permut:}) where $\C_n$ is the set of permutations in $\C$ of length $n$, but it is still not known in general whether the true \emph{growth rate}, $\lim_{n\rightarrow\infty}\sqrt[n]{|\C_n|}$, exists for all permutation classes.} of permutation classes below $\kappa\approx 2.20557$, and in particular proving that there are only countably many classes below this growth rate. 
Grid classes are now being intensely studied in topics ranging from direct enumeration~\cite{aab:2143-4231-enum:} to connections with geometry~\cite{albert:geometric-grid-classes:}, but of particular relevance to this paper is Murphy and Vatter~\cite{murphy:profile-classes:} where grid classes and partial well-order first met, and subsequent work in Waton's thesis~\cite{waton:on-permutation-cl:}, later published in an article with Vatter~\cite{vatter:on-partial-well-order:}. In this paper, we will prove the following:

\begin{theorem}\label{thm-fin-simple-grid} Let $\M$ be a gridding matrix whose non-empty entries are monotone classes, or non-monotone-griddable classes containing only finitely many simple permutations. Then the permutation class $\grid(\M)$ is partially well-ordered if and only if the graph of $\M$ is a forest, and at most one cell in each component is not monotone.\end{theorem}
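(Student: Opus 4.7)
The plan is to prove the two directions separately, with most of the real work concentrated on the construction of infinite antichains in the ``only if'' direction.

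For the ``if'' direction, I would begin by observing that cells lying in distinct connected components of the graph of $\M$ share neither a row nor a column, so the portions of a permutation contributed by different components occupy disjoint horizontal and vertical strips. Consequently, $\grid(\M)$ is built up as a ``merge'' of the grid classes of its individual components, and pwo is preserved under such merges by a direct application of Higman's Theorem to sequences whose labels record, in order, which component each strip comes from. This reduces the problem to showing that the grid class of a single connected component is pwo.

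For a component whose graph is a tree with at most one non-monotone cell, I would fix a canonical gridding of each $\pi \in \grid(\M)$ and encode $\pi$ as a finite labelled structure built over the tree. The unique non-monotone cell (if present) contains a permutation drawn from a class whose only finitely many simple permutations force the class to be pwo via the result of Albert and Atkinson~\cite{albert:simple-permutat:}. Each monotone cell, by contrast, contributes a sequence of points which, together with the partition of its row and column by the boundary points of neighbouring cells, is essentially a labelling by a pwo alphabet. Rooting the tree at the non-monotone cell (or arbitrarily if all cells are monotone) and inducting from the leaves inward, I would apply Higman's Theorem at each stage to glue the child encodings to the parent; the tree structure guarantees that no cell needs to be inspected from more than one direction, so the induction closes cleanly and yields pwo of the component class.

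For the ``only if'' direction, I would establish the contrapositive. If the graph of $\M$ contains a cycle, then the cycle already supports the standard infinite antichain construction of Murphy and Vatter~\cite{murphy:profile-classes:}, which lives inside $\grid(\M)$. The substantive new case is a tree component containing two non-monotone cells $C_1$ and $C_2$. Here I would fix a path in the graph joining $C_1$ to $C_2$ and construct an antichain by threading a generalised pin sequence through the cells on this path, using the symmetry operations on the rows and columns of the gridding referred to in the abstract to keep the pin construction coherent as it passes through cells of varying monotone orientation. Two sufficiently rich, distinguishable subpermutations are then placed inside $C_1$ and $C_2$ respectively, acting as ``anchors'' at the ends of each pin sequence; the usual pin-sequence argument then shows that no member of the resulting family is contained in any other.

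The main obstacle will be the generalised pin construction itself. Classical pin sequences live inside a single ambient permutation, where the geometry is uniform; here the pins must step across potentially many monotone cells between the two non-monotone anchors, and one must check simultaneously that (a) the construction stays inside $\grid(\M)$ under a single consistent gridding, and (b) the presence of distinct anchor permutations in $C_1$ and $C_2$ forces any would-be embedding to respect the pin order, producing the required incomparability. Managing these two requirements in parallel, across arbitrary tree paths and monotone orientations, is the delicate step of the argument.
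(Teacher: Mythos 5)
Your ``only if'' direction is essentially the paper's argument: cycles are dispatched by Theorem~\ref{thm-murphy-vatter}, and for a tree component with two non-monotone-griddable cells one passes to a path joining them, normalises the orientations of the cells by row/column symmetries, and runs a generalised pin sequence back and forth along the path with inflated anchor points in the two end cells (the matrices $\M^k$ and antichains $A^k$ of Sections~\ref{sec-family} and~\ref{sec-grid-antichains}). The one ingredient you should make explicit there is unique griddability: incomparability is naturally proved in the \emph{gridded} order, and to transfer it to ordinary containment and across the symmetry operations the paper needs the antichain elements to be strongly uniquely $\M^k$-griddable (Lemma~\ref{lem-antichain-uniquely-griddable}, Theorem~\ref{thm-strongly-uniquely-not-pwo}); ``distinguishable anchors force embeddings to respect the pin order'' presupposes exactly this control over griddings.

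The genuine gap is in the ``if'' direction. First, your leaf-peeling rests on the claim that the tree structure means no cell need be inspected from more than one direction, but the forest condition on $G_\M$ does not localise interactions to tree edges: two cells sharing a row interleave by value (and two sharing a column interleave by position) even when they are not adjacent in $G_\M$ --- three non-empty cells $A,B,C$ in one row give only the path $A\mbox{--}B\mbox{--}C$, yet the values of $A$ and $C$ still interleave, and a forest certainly allows this. Encoding a leaf cell purely as decorations on its parent therefore discards the relative values/positions of that cell against the other cells in its row or column, so comparability of encodings no longer implies containment of the gridded permutations. Second, even where the peeling is legitimate, the root step needs more than the theorem of Albert and Atkinson~\cite{albert:simple-permutat:} that you cite: after peeling, the non-monotone cell carries decorations drawn from a wqo, so you need the finitely-many-simples class to be well quasi-ordered \emph{as a labelled class}, which is strictly stronger than being pwo and does not follow from pwo in general. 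The paper sidesteps both issues with a single application of Higman's Theorem in which the finitely many simple permutations of (the substitution closure of) the non-monotone cell act as $k$-ary operations on whole $\M$-gridded permutations $\tau_1,\ldots,\tau_k$ --- the $\M$-inflations of Section~\ref{sec-pwo-grid-classes} --- so each operand spans every cell at once and all cross-cell interleaving travels inside the operands rather than in cell-by-cell decorations; the recursive labelling in the proof of Theorem~\ref{thm-grid-pwo} then shows every gridded permutation arises from these operations. Your outline could likely be repaired by adopting that global encoding, but as stated the two steps above would fail.
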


The bulk of the work in proving Theorem~\ref{thm-fin-simple-grid} is in showing:

\begin{theorem}\label{thm-not-pwo}Let $\M$ be a gridding matrix where every non-empty cell is an infinite permutation class. Then $\grid(\M)$ is not partially well-ordered if $\M$ has a cycle, or a component containing two or more cells that are not monotone griddable.\end{theorem}

After introducing the necessary definitions in Section~\ref{sec-definitions}, Section~\ref{sec-pwo-grid-classes} presents Higman's theorem and completes the proof of the right-to-left direction of Theorem~\ref{thm-fin-simple-grid}; the remainder of the paper is devoted to proving Theorem~\ref{thm-not-pwo}. In Section~\ref{sec-symmetry} we introduce a number of symmetries of griddings which reduces the number of classes that have to be considered. In Section~\ref{sec-family} we introduce a family of grid matrices and show that they are the only ones we need to consider, and in Section~\ref{sec-grid-antichains} we show that these classes are not partially well-ordered by constructing antichains that lie in them which satisfy the additional properties required by the symmetry arguments.

\section{Definitions}\label{sec-definitions}

As has become increasingly the case in the study of permutation patterns in recent years, it will prove very useful to view permutations and order isomorphism graphically. Two sets $S$ and $T$ of points in the plane are said to be order isomorphic if we can stretch and shrink the axes for the set $S$ to map the points of $S$ bijectively onto the points of $T$, i.e.\ if there are strictly increasing functions $f,g:\mathbb{R}\rightarrow\mathbb{R}$ such that $\{(f(s_1),g(s_2)) : (s_1,s_2)\in S\}=T$. Note that this forms an equivalence relation since the inverse of a strictly increasing function is also strictly increasing. The {\it plot\/} of the permutation $\pi$ is the point set $\{(i,\pi(i))\}$, and every finite point set in the plane in which no two points share a coordinate (often called a {\it generic\/} or {\it noncorectilinear\/} set) is order isomorphic to the plot of a unique permutation (see Figure~\ref{fig-perm-ex} for an example). Note that, with a slight abuse of terminology, we will say that a point set is order isomorphic to a permutation.

\begin{figure}
\begin{center}
\psset{xunit=0.01in, yunit=0.01in}
\psset{linewidth=0.005in}
\begin{pspicture}(0,0)(90,90)
\psaxes[dy=10,Dy=1,dx=10,Dx=1,tickstyle=bottom,showorigin=false,labels=none](0,0)(90,90)
\pscircle*(10,60){0.04in}
\pscircle*(20,30){0.04in}
\pscircle*(30,50){0.04in}
\pscircle*(40,80){0.04in}
\pscircle*(50,20){0.04in}
\pscircle*(60,90){0.04in}
\pscircle*(70,70){0.04in}
\pscircle*(80,10){0.04in}
\pscircle*(90,40){0.04in}
\end{pspicture}
\end{center}
\caption{The plot of the permutation $\pi=635829714$.}\label{fig-perm-ex}
\end{figure}
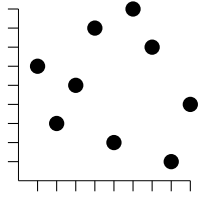

\paragraph{Inflations and Simple Permutations.} An \emph{interval} of a permutation $\pi$ corresponds to a set of contiguous indices $I = [a,b]=\{a,a+1,\ldots,b\}$ such that the set of values $\pi(I) = \{\pi(i) : i\in I\}$ is also contiguous. For example, $645=\pi(345)$ is an interval in $\pi=72645813$.

We form an \emph{inflation of $\sigma$ by the permutations $\tau_1,\ldots,\tau_k$} by replacing the entry $\sigma(i)$ with an interval order isomorphic to $\tau_i$, and denote it by $\sigma[\tau_1,\ldots,\tau_k]$. For example, $2413[21,312,1,12]=32 867 1 45$. Two special cases of inflations are the \emph{direct sum} $\tau_1\oplus\tau_2=12[\tau_1,\tau_2]$ and the \emph{skew sum} $\tau_1\ominus\tau_2=21[\tau_1,\tau_2]$. A \emph{lenient inflation} is an inflation $\sigma[\tau_1,\ldots,\tau_k]$ where we allow one or more of the $\tau_\ell$ to be empty. A class $\C$ is \emph{substitution-closed} (or, in some texts, \emph{wreath-closed}) if $\sigma[\tau_1,\ldots,\tau_k]\in \C$ for all $\sigma,\tau_1,\ldots,\tau_k\in \C$. The \emph{substitution closure} of a set $X$ is the smallest substitution-closed class containing $X$, and is denoted $\langle X\rangle$.

A \emph{simple permutation} is a permutation which has no non-trivial intervals, or equivalently a permutation which cannot be expressed as an inflation of some smaller non-singleton permutation. Conversely:

\begin{proposition}[Albert and Atkinson~\cite{albert:simple-permutat:}]\label{prop-simple-inflation} Every permutation except $1$ can be expressed as the inflation of a unique simple permutation of length at least $2$.
\end{proposition}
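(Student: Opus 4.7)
The plan is to prove existence by exhibiting a decomposition drawn from the interval structure of $\pi$, and then to prove uniqueness by showing that any valid simple quotient $\sigma$ is forced by that structure. The preliminary fact I would use throughout is the \emph{lattice property of intervals}: if $I$ and $J$ are intervals of $\pi$ which overlap (meet but neither contains the other), then $I\cap J$, $I\cup J$ and $I\setminus J$ are also intervals of $\pi$. This follows directly from the definition.

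For existence, if $\pi$ is itself simple of length $\geq 2$, take $\sigma=\pi$ and each $\tau_i=1$. Otherwise, if $\pi$ is a direct sum, write $\pi=\tau_1\oplus\tau_2$ with $\tau_1$ sum-indecomposable and take $\sigma=12$; dually for skew sums, take $\sigma=21$. In the remaining case, the lattice property forces the maximal proper intervals of $\pi$ to be pairwise disjoint---otherwise their union would cover all of $\pi$ and $\pi$ would itself be a (skew) sum. Since every position lies in some maximal proper interval (possibly the singleton $\{i\}$), these blocks partition the positions of $\pi$ into $k$ pieces $I_1,\ldots,I_k$. Letting $\sigma$ be the quotient pattern and $\tau_i$ the pattern on $I_i$ gives $\pi=\sigma[\tau_1,\ldots,\tau_k]$. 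Moreover $\sigma$ is simple: any non-trivial interval of $\sigma$ would lift to a proper interval of $\pi$ strictly containing some $I_j$, contradicting maximality. Since no permutation of length $3$ is simple, automatically $|\sigma|\geq 4$ in this case.

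For uniqueness, suppose $\pi=\sigma[\tau_1,\ldots,\tau_k]=\sigma'[\tau'_1,\ldots,\tau'_{k'}]$ with both $\sigma,\sigma'$ simple of length $\geq 2$. The pieces occupy intervals $I_i$ and $I'_j$ of $\pi$. If the two families coincide as sets of intervals, then $\sigma=\sigma'$ is read off from the spatial arrangement of the blocks. If instead some $I_i$ properly overlaps some $I'_j$, the lattice property produces a non-trivial interval of $\pi$ that quotients down to a non-trivial interval of $\sigma$ or $\sigma'$, contradicting simplicity.

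The main obstacle is the sum/skew-sum case, in which the $\tau_i$'s are manifestly not unique (for instance $12\oplus 1\oplus 21$ admits several binary decompositions), yet $\sigma$ must be. Here one uses that $12\cdots k$ and $k(k-1)\cdots 1$ fail to be simple for $k\geq 3$, so any simple $\sigma$ arising from a sum-decomposable $\pi$ has length $2$, and dually for skew sums; this pins down $\sigma\in\{12,21\}$ uniquely, even though the two blocks may be chosen in several ways.
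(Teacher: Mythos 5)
The paper itself does not prove this proposition (it is quoted from Albert and Atkinson), so the only issue is whether your argument stands on its own. Your existence half is essentially the standard argument and is fine: either $\pi$ is simple, or it is a sum/skew sum (quotient $12$ or $21$), or else the lattice property makes the maximal proper intervals pairwise disjoint and the quotient by them is simple of length $\geq 4$.

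The gap is in uniqueness. Your dichotomy --- ``the two block families coincide'' versus ``some $I_i$ properly overlaps some $I'_j$'' --- is both incomplete and, in its second branch, wrong. It omits the nested case (families differ but every intersecting pair is comparable by inclusion), and the claim that a proper overlap always yields a non-trivial interval of $\sigma$ or $\sigma'$ is false: for $\pi=123$ the two decompositions $12[1,12]$ and $12[12,1]$ have properly overlapping blocks $\{2,3\}$ and $\{1,2\}$, yet no contradiction exists, because the union of the overlapping blocks is all of $\pi$ and $12$ has no non-trivial intervals. So the overlap branch cannot be waved through; it is only salvageable once one knows that at least one of $\sigma,\sigma'$ having length $\geq 4$ precludes such configurations. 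You correctly identify the sum/skew-sum case as the crux, but the patch in your last paragraph does not prove what is needed. The fact that $12\cdots k$ and $k\cdots 21$ are not simple for $k\geq 3$ says nothing about whether a sum-decomposable $\pi$ could also be written as $\sigma[\tau_1,\ldots,\tau_k]$ with $\sigma$ simple of length $\geq 4$ --- and ruling that out is exactly the content of uniqueness in the decomposable case. The missing argument runs roughly: if $\pi=\alpha\oplus\beta=\sigma[\tau_1,\ldots,\tau_k]$, then the prefix of positions occupied by $\alpha$, carrying the bottom values, meets the blocks in a union of initial blocks plus at most an initial piece of one further block; comparing value ranges forces an initial segment of $\sigma$ to occupy the bottom values of $\sigma$, i.e.\ $\sigma$ is itself sum-decomposable, hence not simple unless $|\sigma|=2$. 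Only then is $\sigma$ pinned to $12$ (dually $21$), and since a permutation cannot be both sum- and skew-decomposable, $\sigma$ is unique there; the case where both quotients have length $\geq 4$ then needs its own (trace-of-an-interval) argument showing the blocks must be the maximal proper intervals, which your one-line overlap claim does not supply.
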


This proposition shows how simple permutations can be thought of as the ``building blocks'' of all other permutations, and consequently they play an important role in the study of permutation classes and have received much attention in recent years --- see~\cite{brignall:a-survey-of-sim:} for a survey. We will denote by $\si(\C)$ the set of simple permutations in the class $\C$. Note that $\si(\C)=\si(\langle\C\rangle)$, and also that $\langle\C\rangle = \langle\si(\C)\rangle$.

\paragraph{Grid Classes.} We will present here only a brief survey of the necessary results, and refer the reader to Vatter~\cite{vatter:small-permutat:} for a more complete treatment of this topic. To draw a parallel with the way we view permutations graphically, we will index matrices and grids starting from the bottom-left corner, and with the order of indices swapped. In other words, the $ij$th entry of a matrix (respectively, $ij$th cell of a grid) corresponds to the entry (cell) in column $i$ and row $j$, and an $m\times n$ matrix has $m$ columns and $n$ rows.

An $m\times n$-\emph{gridding} of a permutation $\pi$ is a collection of $m-1$ distinct vertical and $n-1$ distinct horizontal lines that divide the plot of $\pi$ into $mn$ cells. A permutation equipped with a particular $m \times n$-gridding is called an $m\times n$-\emph{gridded permutation}, and for such a gridded permutation $\pi$, $\pi^{st}$ denotes the set of points contained in the $st$th cell.

Let $\M$ be an $m\times n$ matrix where each entry is a permutation class (noting that we permit the empty class $\emptyset$): $\M$ is called a \emph{gridding matrix}. (To avoid trivialities, we will always assume that $\M$ does not have any rows or columns consisting entirely of empty cells.) An $\M$-gridding of a permutation $\pi$ is an $m\times n$ gridding of $\pi$ such that $\pi^{st}$ lies in the class $\M_{st}$ for all $s\in[m]$ and $t\in[n]$. If $\pi$ possesses an $\M$-gridding, then $\pi$ is said to be \emph{$\M$-griddable}, and equipping $\pi$ with such a gridding gives rise to an \emph{$\M$-gridded permutation}. Similarly, a permutation class $\C$ is said to be \emph{$\M$-griddable} if every $\pi\in\C$ is $\M$-griddable. The largest permutation class that is $\M$-griddable (i.e.\ the class consisting of all $\M$-griddable permutations) is called the \emph{grid class} of $\M$, and is denoted $\grid(\M)$. One special case that has received particular attention has been that of \emph{monotone grid classes}, where $\M$ has only monotone (i.e.\ the classes $\av{21}$ and $\av{12}$) or empty entries.

Now let $\C$ and $\D$ be permutation classes. We say that $\C$ is \emph{$\D$-griddable} if there is some matrix $\M$ whose entries are all subclasses of $\D$ for which $\C$ is $\M$-griddable. The following theorem gives a good characterisation of $\D$-griddability:

\begin{theorem}[Vatter~\cite{vatter:small-permutat:}]\label{thm-griddable}
A permutation class $\C$ is $\D$-griddable if and only if it does not contain arbitrarily long direct sums or skew sums of basis elements of $\D$.\end{theorem}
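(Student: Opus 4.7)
\emph{Forward direction.} Assume $\C$ is $\M$-griddable with $\M$ of dimensions $m \times n$ and entries contained in $\D$. If $\pi = \beta_1 \oplus \cdots \oplus \beta_N \in \C$ with each $\beta_i \in B(\D)$, fix any $\M$-gridding of $\pi$: the $N$ summands occupy rectangular boxes pairwise disjoint in both column- and row-ranges (an ascending staircase), so each of the $m - 1$ vertical and $n - 1$ horizontal gridlines crosses at most one box. Consequently, if $N \geq m + n - 1$ then some $\beta_i$ lies inside a single cell and hence belongs to $\D$, contradicting $\beta_i \in B(\D)$. The $\ominus$-sum case is symmetric, with the staircase descending. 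Thus $\C$ contains only $\oplus$- and $\ominus$-sums of basis elements of $\D$ of bounded length.

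\emph{Reverse direction, overall strategy.} I would prove the contrapositive: if $\C$ is not $\D$-griddable, then $\C$ contains $\oplus$- or $\ominus$-sums of basis elements of $\D$ of arbitrary length. Let $f(\pi)$ denote the minimum number of cells in any gridding of $\pi$ whose cells each lie in $\D$. If $f$ is bounded on $\C$ by some $F$, then every $\pi \in \C$ admits an $F \times F$ gridding with all cells in $\D$ (pad out with empty rows and columns), so $\C \subseteq \grid(\M)$ for $\M$ the $F \times F$ all-$\D$ matrix. Hence it suffices to show that if $f$ is unbounded on $\C$ one can produce arbitrarily long $\oplus$- or $\ominus$-sums of basis elements of $\D$; accordingly, pick $\pi_k \in \C$ with $f(\pi_k) \geq k$.

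\emph{Iterative peeling.} The core construction is a greedy column-peeling of each $\pi_k$. Let $P_1$ be the longest initial column-segment of $\pi_k$ whose point-set lies in $\D$; remove it and iterate on the remainder to produce $P_2, P_3, \ldots$ Since singletons lie in $\D$, each $P_i$ is nonempty, and the total number of steps is at least $f(\pi_k) \geq k$. By maximality of $P_i$, adjoining the next point $p_i$ yields a set outside $\D$ and therefore containing a basis element $\beta^{(i)} \in B(\D)$. Consecutive witnesses share at most the single column of $p_i$, so $\beta^{(1)}, \beta^{(3)}, \beta^{(5)}, \ldots$ are pairwise column-disjoint, and their number grows without bound as $k$ does.

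\emph{Main obstacle.} The hardest step is assembling these column-disjoint witnesses into a genuine $\oplus$- or $\ominus$-sum: column-disjointness gives a left-to-right order on the $\beta^{(i)}$, but their value-ranges may overlap arbitrarily. I would handle this with a two-stage thinning. First, a one-dimensional Erd\H{o}s--Szekeres applied to the minimum values of the column-disjoint witnesses extracts a monotone-minimum subsequence of length roughly $\sqrt{k}$; then a further interval-order refinement (equivalently, an analogous row-peeling procedure run in parallel with the column-peeling) produces a subsequence whose value-ranges are themselves disjoint and monotonically ordered. Such a subsequence is precisely a $\oplus$- or $\ominus$-sum of basis elements of $\D$ sitting inside some $\pi_k \in \C$, and its length is unbounded in $k$, contradicting the hypothesis on $\C$ and completing the proof.
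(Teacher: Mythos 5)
This theorem is not proved in the paper at all --- it is imported from Vatter's work as a black box --- so your proposal has to be judged on its own merits. Your forward direction is fine and is the standard argument: in any $\M$-gridding (with $\M$ of size $m\times n$ and entries subclasses of $\D$) the summands of $\beta_1\oplus\cdots\oplus\beta_N$ occupy boxes with pairwise disjoint position-ranges and value-ranges, each of the $m+n-2$ gridlines can cut at most one box, so if $N\geq m+n-1$ some $\beta_i$ lies in a single cell and hence in $\D$, a contradiction; the skew case is symmetric.

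The reverse direction, however, has a genuine gap at exactly the step you flag as the main obstacle, and the thinning you propose cannot close it. After the column-peeling, the only information your argument carries forward is a family of copies of basis elements of $\D$ lying in pairwise disjoint position intervals, and you then claim that an Erd\H{o}s--Szekeres step on minimum values plus an ``interval-order refinement'' extracts a long sum or skew sum. That inference is false. Take $\D=\av{21}$ and $\pi=(n{+}1)\,1\,(n{+}2)\,2\,\cdots\,(2n)\,n$: your greedy column-peeling applied to $\pi$ produces on the order of $n$ strips and $n$ column-disjoint copies of the basis element $21$, yet every pair of these copies has overlapping value-ranges, and $\pi$ contains neither $21\oplus 21=2143$ nor $21\ominus 21=4321$ (indeed $\pi$ is griddable by a single column of two $\av{21}$ cells). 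So no refinement that sees only the column-disjoint witnesses can possibly produce the desired staircase; any correct proof must re-use the full hypothesis that \emph{every} $\D$-gridding of $\pi_k$ needs many cells --- for instance by manufacturing witnesses whose bounding rectangles are disjoint in \emph{both} coordinates, to which an Erd\H{o}s--Szekeres argument for independent rectangles does apply. Extracting such doubly disjoint witnesses from the failure of griddability is the real content of the theorem (and is why Vatter's proof is substantially more involved than a greedy peeling); your outline, as written, asserts this step rather than proving it, and the asserted intermediate implication is refuted by the example above. Your vertical-strip count can also wildly overshoot the true gridding cost (the same $\pi$ has $f(\pi)=2$ but $n$ peeling steps), which is a further sign that the peeling discards the information the proof actually needs.
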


A particular instance of this theorem is that a permutation class is monotone griddable if and only if it does not contain arbitrarily long direct sums of $21$ or skew sums of $12$. Define the \emph{sum completion} of a permutation $\pi$ to be the permutation class $\oplus\pi=\{\alpha_1\oplus\alpha_2\oplus\cdots\oplus\alpha_k : \alpha_i\leq\pi\textrm{ for all }i\leq k\in \mathbb{N}\}$, and the \emph{skew completion} $\ominus\pi$ analogously. Thus:

\begin{corollary}\label{cor-non-monotone}A permutation class $\C$ is monotone griddable if and only if it contains neither the class $\oplus 21$ nor the class $\ominus 12$.
\end{corollary}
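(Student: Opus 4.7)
The plan is to derive this corollary by applying Theorem~\ref{thm-griddable} to the class $\D := \av{21}\cup\av{12}$ of all monotone permutations, whose basis is $B = \{132, 213, 231, 312\}$ — the length-$3$ permutations containing both a $12$ and a $21$. The elementary facts I would record up front are: every $\beta\in B$ satisfies $21\leq\beta$ and $12\leq\beta$; and moreover $213\leq 21\oplus 21=2143$ (take positions $1,2,3$) while $312\leq 12\ominus 12=3412$ (take positions $1,3,4$). Before using the theorem I would verify that monotone griddability coincides with $\D$-griddability. The forward direction is immediate. For the converse, any subclass of $\D$ consists entirely of monotone permutations, so each cell of any $\M$-gridded permutation is a monotone point set; replacing each entry of $\M$ by the $2\times 2$ block $\bigl(\begin{smallmatrix}\av{21} & \av{12}\\ \av{12} & \av{21}\end{smallmatrix}\bigr)$ and re-gridding (increasing cells along the $\av{21}$ diagonal, decreasing cells along the $\av{12}$ anti-diagonal) produces a gridding whose entries all lie in $\{\av{21},\av{12},\emptyset\}$.

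For the forward direction of the corollary, suppose $\C$ is monotone griddable — equivalently $\D$-griddable — and that $\oplus 21\subseteq\C$. Then $(21)^{\oplus 2k}\in\C$ for every $k$, and $213\leq 21\oplus 21$ gives $(213)^{\oplus k}\leq(21)^{\oplus 2k}$, so $\C$ contains arbitrarily long sums of the basis element $213\in B$, contradicting Theorem~\ref{thm-griddable}. The case $\ominus 12\subseteq\C$ is symmetric via $312\leq 12\ominus 12$.

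For the converse, assume $\C$ contains neither $\oplus 21$ nor $\ominus 12$. Since $21\leq\beta$ for every $\beta\in B$, any sum of basis elements $\beta_1\oplus\cdots\oplus\beta_n$ contains $(21)^{\oplus n}$; if $\C$ housed arbitrarily long such sums it would therefore contain $(21)^{\oplus n}$ for all $n$, forcing $\oplus 21\subseteq\C$, a contradiction. The skew-sum case is analogous using $12\leq\beta$. Theorem~\ref{thm-griddable} then delivers $\D$-griddability, which by the first paragraph is the same as monotone griddability.

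The only genuinely non-routine step is the equivalence of $\D$-griddability with monotone griddability, since a priori a subclass of $\D$ can mix increasing and decreasing permutations; the $2\times 2$ diagonal-refinement trick is what collapses this back to the setting of cells that are single monotone classes. Everything else is an elementary exercise in containment among small permutations together with a direct invocation of Theorem~\ref{thm-griddable}.
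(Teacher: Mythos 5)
Your bookkeeping with the basis $B=\{132,213,231,312\}$ of the class of all monotone permutations is correct (each $\beta\in B$ contains both $12$ and $21$, $213\leq 2143$, $312\leq 3412$), and it faithfully implements what the paper leaves implicit when it reads Theorem~\ref{thm-griddable} as ``monotone griddable if and only if no arbitrarily long sums of $21$ or skew sums of $12$''. Note also that your forward direction only needs the trivial inclusion (monotone griddable $\Rightarrow$ $\D$-griddable for $\D=\av{12}\cup\av{21}$), so that half is fine. The genuine gap is exactly the step you flag as the only non-routine one: the claim that $\D$-griddability implies monotone griddability, which you support only by ``replace each entry by $\bigl(\begin{smallmatrix}\av{21}&\av{12}\\ \av{12}&\av{21}\end{smallmatrix}\bigr)$ and re-grid''. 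This is not a relabelling exercise, because the new vertical line added inside an original column is shared by \emph{every} cell of that column (and likewise for rows), so it must simultaneously serve cells whose contents are increasing in some rows and decreasing in others, with interleaved positions and values.

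Concretely, for one cell of your block an increasing content cannot meet both $\av{21}$ subcells (a point in the upper-left subcell and a point in the lower-right subcell form an inversion), and it can meet each $\av{12}$ subcell in at most one point; so essentially all of an increasing cell's content must land in a single corner subcell, which forces on which side of that content the two new lines pass. Encoding ``the new line passes left/right (resp.\ below/above) of the bulk of the column's (row's) contents'' as a two-colouring of columns and rows, an increasing cell forces its row- and column-colours to agree and a decreasing cell forces them to disagree; a $2\times 2$ pattern of cells with three increasing contents and one decreasing content (realisable inside the all-$\D$ $2\times 2$ matrix, with contents interleaved so the new lines cannot separate the two cells sharing a row or column) admits no consistent choice. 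So the asserted re-gridding does not exist in general as a refinement of the given gridding, and you offer no argument that some other gridding by your refined matrix, or by any monotone matrix, must exist. This bridge --- that a class griddable by cells which are merely unions of monotone classes is itself monotone griddable --- is essentially the content of the Huczynska--Vatter monotone-griddability theorem (equivalently, a transitivity-of-griddability lemma), and it needs a real proof or a citation; the paper does not prove it either, but it also does not claim to: it simply records the corollary as a known instance of Theorem~\ref{thm-griddable}. To repair your write-up, either cite that result directly or prove the refinement step honestly; the one-line version would fail.
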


\paragraph{Grid classes and partial well-order.} The \emph{graph} of the gridding matrix $\M$ is the graph $G_\M$ whose vertices are the non-empty cells of $\M$, with two vertices being adjacent if they share a row or a column of $\M$ and all cells between them are empty. A \emph{component} of $\M$ is a submatrix $\M'$ of $\M$ for which $G_{\M'}$ is a connected component of $G_\M$. In determining whether grid classes are partially well-ordered, it is sufficient to look at these components individually:

\begin{proposition}[Vatter~\cite{vatter:small-permutat:}]\label{prop-component} $\grid(\M)$ is partially well-ordered if and only if $\grid(\M')$ is partially well-ordered for every connected component $\M'$ of $\M$.\end{proposition}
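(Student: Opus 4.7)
The plan is to prove both directions of the equivalence. For the forward direction, I would observe that $\grid(\M')\subseteq \grid(\M)$ for any component $\M'$, since an $\M'$-gridding of a permutation can be extended to an $\M$-gridding by adjoining empty horizontal and vertical strips for the rows and columns of $\M$ absent from $\M'$. Any antichain in $\grid(\M')$ is then an antichain in $\grid(\M)$, so partial well-order of the latter class immediately forces it in the former.

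For the converse I would use a product-of-components argument. The essential structural observation is that every row (and column) of $\M$ containing a non-empty cell lies in a single component: two non-empty cells sharing a row are either directly adjacent (if the cells between them are empty) or linked via the other non-empty cells of that row, and symmetrically for columns. Consequently the rows and columns of $\M$ partition among the components $\M^{(1)},\ldots,\M^{(k)}$. Fixing an $\M$-gridding of each $\sigma\in\grid(\M)$, its points split disjointly into subpermutations $\sigma^{(i)}\in\grid(\M^{(i)})$, and I would study the assignment $\sigma\mapsto(\sigma^{(1)},\ldots,\sigma^{(k)})$.

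The crux of the argument is that this assignment is order-reflecting with respect to gridding-respecting containment: if, for each $i$, there is a copy of $\sigma^{(i)}$ inside $\tau^{(i)}$ respecting the inherited cell structure, then these copies patch together into a copy of $\sigma$ inside $\tau$. This is because any two points of $\sigma$ lying in different components have their relative horizontal and vertical orders determined entirely by the $\M$-columns and $\M$-rows of their respective cells, and this positional data is identical in $\sigma$ and $\tau$. Granted the claim, an infinite antichain in $\grid(\M)$ would descend to an infinite antichain of tuples in the finite product $\prod_i\grid(\M^{(i)})$, contradicting the hypothesis that each factor is partially well-ordered (finite products of partially well-ordered sets are themselves partially well-ordered, by a routine application of Higman's theorem).

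The main obstacle will be upgrading regular containment to gridding-respecting containment inside each component, since an arbitrary copy of $\sigma^{(i)}$ inside $\tau^{(i)}$ need not honour the inherited cell structure---a difficulty which bites exactly when two components have interleaved columns in $\M$. I would close this gap by the standard device of passing to gridded permutations, arguing separately for each component class that regular partial well-order lifts to the gridded setting, so that the combining step above applies directly to the gridded embeddings produced by the product argument.
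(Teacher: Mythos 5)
You are reviewing a statement that the paper itself does not prove: Proposition~\ref{prop-component} is quoted from Vatter, so your proposal can only be judged against the standard argument. Most of it is sound and is indeed the natural route. The forward direction is fine ($\grid(\M')\subseteq\grid(\M)$, and pwo passes to subclasses). Your structural observation is correct: all non-empty cells in a given row (or column) lie in one component, so distinct components occupy disjoint sets of rows and columns; consequently a fixed $\M$-gridding of $\sigma$ splits it into gridded component parts, and your patching claim is right \emph{provided} the embeddings of the parts are cell-respecting, since cross-component order is then decided by the cell indices alone. (Minor point: finite products of pwo orders are pwo by repeated extraction of increasing subsequences; Higman is not needed there.)

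The genuine gap is the step you defer to "the standard device": the claim that pwo of $\grid(\M^{(i)})$ lifts to pwo of the set of $\M^{(i)}$-gridded permutations under $\leq_{\M^{(i)}}$. That lift is precisely the crux, and it is not a formal consequence of ungridded pwo: a permutation generally has many griddings, an ungridded embedding need not respect any nominated gridding (this is exactly the point of Lemma~\ref{lem-m-gridding-containment}, the $135246$ versus $531246$ example, and the whole "strongly uniquely griddable" apparatus in Section~\ref{sec-symmetry}), and a priori an infinite antichain of gridded objects could sit over a chain of underlying permutations. There is no general theorem you can quote saying "$\grid(\N)$ pwo implies the $\N$-gridded poset is pwo"; in the cases where the gridded statement is known (monotone forest components, or components with one non-monotone cell having finitely many simples, as in Theorem~\ref{thm-grid-pwo}), it is proved directly by Higman-type encodings of gridded permutations, not deduced from ungridded pwo. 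Nor can the interleaving difficulty be evaded by permuting rows and columns to separate the components, since that changes the grid class. So as written, your converse direction stops exactly where the real work begins: you must either prove the gridded lift for arbitrary component entries, or restructure the argument so that it produces cell-respecting embeddings directly (which is what the proofs in the literature do).
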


In the case of monotone grid classes, the connection between $G_\M$ and partial well-order is well known:

\begin{theorem}[Murphy and Vatter~\cite{murphy:profile-classes:}]\label{thm-murphy-vatter} The monotone grid class $\grid(\M)$ is partially well-ordered if and only if $G_\M$ is a forest.\end{theorem}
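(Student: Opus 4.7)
The plan is to attack each direction of the equivalence separately, reducing in both cases to a single connected component of $G_\M$ via Proposition~\ref{prop-component}.

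For the forward direction (forest $\Rightarrow$ partially well-ordered), I would assume $G_\M$ is a tree $T$, root $T$ at an arbitrary cell $v_0$, and encode each permutation $\pi\in\grid(\M)$ as a word over a finite alphabet. The idea is a depth-first traversal of $T$: at each cell $v$ we have a monotone sequence of points, and we interleave, between consecutive points of $v$, recursively-generated subwords describing the contents of the subtrees rooted at each neighbour of $v$ lying in that direction (above, below, left or right of the current point, whichever the neighbour corresponds to). Because $T$ is a tree, each cell has a fixed, well-defined set of neighbouring directions, so the alphabet is finite (letters encoding cell label plus direction plus monotone orientation). Higman's Theorem then yields well-quasi-order of these words under subword embedding, and it remains to verify that a subword embedding of encodings induces a permutation containment of the underlying gridded permutations --- a routine bookkeeping argument exploiting the tree structure to ensure that nothing from one subtree can ``collide'' with points of another.

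For the backward direction (cycle $\Rightarrow$ not partially well-ordered), I would locate a cycle $v_1 v_2 \cdots v_k v_1$ in $G_\M$ and construct an explicit infinite antichain $\{\alpha_n\}$ living in the cells of that cycle. The prototype is the familiar increasing-oscillation antichain inside the $2\times 2$ all-increasing grid, whose graph is $C_4$; the general construction generalises this to a pattern that winds many times around the cycle, placing each successive point just above, below, left, or right of the previous one, the choice being dictated by the monotonicity of the two cells involved and their shared boundary row or column. To verify that $\{\alpha_n\}$ is an antichain, I would argue that any embedding $\alpha_m\hookrightarrow\alpha_n$ is forced to respect the rigid cyclic pattern: consecutive points of $\alpha_m$ must map to points of $\alpha_n$ lying in cyclically adjacent cells of the cycle, and a short parity/length argument then forces $m = n$.

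The main obstacle is the backward direction, in two senses. First, one needs a uniform antichain construction accommodating arbitrary cycles (not just $C_4$) and arbitrary monotone orientations in the cycle cells; this forces careful sign choices as one walks around the cycle, especially at transitions between cells of opposite orientation. Second, the verification of incomparability is the heart of the argument: one must show that the gridding rigidity propagates from any single embedded point all the way around the cycle, which is really a combinatorial statement about how monotone sequences in adjacent cells can interleave. I would expect most of the technical work to live in this second step.
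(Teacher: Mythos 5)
The first thing to note is that this paper does not prove Theorem~\ref{thm-murphy-vatter}: it is quoted from Murphy and Vatter, and the text records only that one direction constructs an antichain winding around a cycle while the other applies Higman's Theorem (reproved more efficiently by Waton). Your outline follows that strategy in broad terms, but in the cycle direction the construction you describe fails as stated. An open winding sequence --- each point placed just above, below, left of or right of its predecessor as one travels repeatedly around the cycle --- has the property that the first $m$ laps of $\alpha_n$ are order isomorphic to $\alpha_m$ (later points never disturb the pattern of the earlier ones), so your family $\{\alpha_n\}$ is a chain, not an antichain; no parity or length count can help, because an embedding of $\alpha_m$ into $\alpha_n$ is free to use only an initial portion of the longer winding. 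The same issue already afflicts your prototype: plain increasing oscillations are pairwise comparable. Rigidity has to be manufactured, either by closing the loop (as in Murphy and Vatter's spiral-type antichain) or by anchoring the two ends so that any embedding is forced to carry ends to ends --- which is precisely the role of the inflations $p_1[21]$ and $p_{(2i-1)k}[\beta]$ in the antichains $A^k$ of Section~\ref{sec-grid-antichains}, and it is these anchors that make the length argument in Lemma~\ref{lem-ak-antichain} work. You also need to control the griddings of the antichain elements before you may assert that an embedding respects the cells of the cycle at all; compare Lemma~\ref{lem-antichain-uniquely-griddable} and Theorem~\ref{thm-strongly-uniquely-not-pwo}.

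In the forest direction, the step you call routine bookkeeping is where the content lies, and for a flat word encoding it is false in general that a subword embedding induces permutation containment. Two cells lying in a common row but in different branches of your rooted tree (for instance two row-neighbours on opposite sides of the root) have value interleavings that are not recorded by per-edge interleaving data, so distinct, incomparable gridded permutations can receive the same code, and conversely a subword embedding may match letters drawn from different gaps without respecting the recursive structure. To repair this one must either encode a single global reading order along each row and column, exploiting the forest structure to choose consistent orientations (essentially Waton's proof, and the later geometric grid class machinery), or abandon plain word order and apply Higman's Theorem to a finite family of $k$-ary gridded-inflation operations, which is exactly how Theorem~\ref{thm-grid-pwo} is proved in this paper. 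Either way, that verification is a genuine argument, not an afterthought.
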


One direction of this theorem is proved by constructing an antichain that ``winds around'' the cells corresponding to a cycle of $G_\M$, while the other requires Higman's Theorem and has been reproved more efficiently by Waton~\cite{waton:on-permutation-cl:}. Our proof of Theorems~\ref{thm-fin-simple-grid} and~\ref{thm-not-pwo} will borrow a lot from the techniques in these two publications.

\section{Partially Well Ordered Grid Classes}\label{sec-pwo-grid-classes}

We complete one half of the proof of Theorem~\ref{thm-fin-simple-grid} by proving the following theorem.

\begin{theorem}\label{thm-grid-pwo} Let $\M$ be a gridding matrix whose entries are all permutation classes containing only finitely many simple permutations, and for which $G_\M$ is a forest and every component of $\M$ contains at most one cell labelled by a class that is not monotone. Then $\grid(\M)$ is partially well-ordered.\end{theorem}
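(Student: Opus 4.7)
My strategy is to reduce to a single component, upgrade well-quasi-order of the individual cells to the whole grid class via Higman's theorem, and handle the single non-monotone cell through the substitution decomposition of Proposition~\ref{prop-simple-inflation}.

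First, by Proposition~\ref{prop-component}, it is enough to prove the theorem under the assumption that $\M$ is a single component: so $G_\M$ is a tree and at most one of its cells is non-monotone. If there is no non-monotone cell, the claim reduces to Theorem~\ref{thm-murphy-vatter}, so I may assume there is exactly one such cell $\M_{kl}$. Next I would observe that each cell of $\M$ is itself well-quasi-ordered. Monotone cells are chains. For $\M_{kl}$, Proposition~\ref{prop-simple-inflation} decomposes every permutation iteratively as an inflation of a simple drawn from the finite set $\si(\M_{kl})$, yielding a finite labelled tree whose internal nodes come from that finite alphabet and whose leaves are singletons; an application of Higman's theorem to this tree encoding gives that $\M_{kl}$ is well-quasi-ordered in a way that respects pattern containment.

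With this in place, I would fix a canonical $\M$-gridding of each $\pi \in \grid(\M)$ (for example the one obtained by greedy left-to-right, bottom-to-top placement among the finitely many $\M$-griddings of $\pi$). Root $G_\M$ at $\M_{kl}$ and list the cells in a fixed depth-first order. Encode each canonically gridded permutation as a word whose letters come from the following well-quasi-ordered alphabet: for each cell $c$, record a letter that combines the WQO description of $c$'s contents with a binary word over the constant-size alphabet describing, for every edge of $G_\M$ adjacent to $c$, which of the two incident cells each point in the shared row or column belongs to. Higman's theorem applied to this alphabet yields that the set of encodings is well-quasi-ordered under the subsequence ordering.

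The final step --- and the part I expect to be the main obstacle --- is to verify that the encoding is monotone with respect to containment: a subword embedding between two encodings must induce a pattern-containment between the underlying permutations. The delicate point is that embeddings of cell contents chosen independently cell by cell have to assemble, via the interleaving binary words, into one coherent order-preserving injection on the entire plot. This is exactly what the canonical gridding and the interleaving words are designed to provide, and it is where the forest structure of $G_\M$ (which prevents cyclic compatibility constraints between cells) is essential. Once monotonicity is established, well-quasi-order of the encodings transfers to $\grid(\M)$, completing the proof.
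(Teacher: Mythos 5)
Your reduction to a single component and your use of Albert--Atkinson/Higman to see that each individual cell is well quasi-ordered are fine, but the proof has a genuine gap at precisely the point you flag as ``the main obstacle'': the coherence of the encoding is the entire content of the theorem, and your encoding as described does not deliver it. If each letter of your word is a pair (WQO-description of the cell's contents, binary interleaving words along the incident edges), then an embedding of encodings is a pair of \emph{independent} embeddings --- one between cell contents, one between interleaving words --- and there is no reason these can be realised by a single order-preserving injection on the plot: the points selected by the cell-content embedding need not be the points marked by the subword embedding of the interleaving data. For monotone cells this defect is harmless (any selection of the right number of points in the right interleaved positions yields the same pattern, which is why Murphy--Vatter/Waton-style word encodings work for monotone grid classes), but for the non-monotone cell it is fatal, and nothing in the proposal repairs it. A second, smaller defect: recording interleavings only edge-by-edge loses the relative order (in value or position) between two non-empty cells of the same row or column that are not adjacent in $G_\M$, which is allowed under the hypotheses; pairwise interleavings with an intermediate cell do not determine it, so comparability of encodings would not even imply containment of the gridded permutations.

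The paper resolves exactly this difficulty by a different use of Higman's theorem: not the subsequence/word form, but the abstract-algebra form with a divisibility order. Rooting $G_\M$ at the non-monotone cell $uv$ (labelled by $\D$, which may be assumed substitution-closed), it defines for each $\sigma\in\si(\D)$ an \emph{$\M$-inflation} operation on $\M$-gridded permutations: the cell $uv$ is inflated by $\sigma$ in the usual sense, and in every other (monotone) cell the pieces $\tau_1^{st},\ldots,\tau_k^{st}$ are inflated into $1\cdots k$ or $k\cdots 1$ in an order propagated recursively from the parent cell down the tree, so that consistency of the interleavings is built into the operation rather than checked afterwards. One then shows every $\M$-gridded permutation is generated from the gridded singletons by this finite family of operations (together with lenient inflations of $12$ and $21$ for the degenerate cases), and Higman's theorem applied to this finitely-generated algebra, with gridded containment $\leq_\M$ as the compatible divisibility order, gives partial well-order of $\grid(\M)$. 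If you want to salvage your word-encoding route, you would have to make the letters carry enough coupled information to force a single coherent embedding --- at which point you are essentially rebuilding the paper's $\M$-inflation construction.
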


We begin by giving a complete presentation of Higman's Theorem, which will form the backbone of the proof of Theorem~\ref{thm-grid-pwo}. We say that $(A,M)$ is an \emph{abstract algebra} if $A$ is a set of elements and $M$ a set of operations where each $\mu\in M$ has an \emph{arity}: we say that $\mu$ is a $k$-ary operation if $\mu:A^k\rightarrow A$ for some positive integer $k$. Denote the set of $k$-ary operations in $M$ by $M_k$, and suppose that $M_k$ is empty for every $k>n$ for some $n$. (Note that we will allow $0$-ary operations.) The abstract algebra $(A,M)$ is said to be \emph{minimal} if no subset $B$ of $A$ allows $(B,M)$ to be an abstract algebra.

A partial order $\leq_A$ on the set of elements $A$ is a \emph{divisibility order} on $(A,M)$ if every operation $\mu\in M_k$, $k=0,1,\ldots,n$, satisfies
\begin{itemize}
\item $a \leq_A b$ implies $\mu(\mathbf{x}, a, \mathbf{y}) \leq_A \mu(\mathbf{x}, b, \mathbf{y})$,
\item $a \leq_A \mu(\mathbf{x}, a, \mathbf{y})$,
\end{itemize}
where $\mathbf{x}$ and $\mathbf{y}$ are arbitrary sequences comprising elements of $A$ whose lengths sum to $k-1$. Furthermore, given partial orders $\leq_{M_k}$ on $M_k$, $k=0,1,\ldots,n$, we say that $\leq_A$ is \emph{compatible} with these partial orders if, for $\lambda,\mu\in M_k$,
\begin{itemize}
\item $\lambda \leq_{M_k} \mu$ implies $\lambda(\mathbf{x}) \leq_A \mu(\mathbf{x})$ for all $\mathbf{x}\in A^k$.
\end{itemize}

\begin{theorem}[Higman~\cite{higman:ordering-by-div:}]\label{thm-higman}
Suppose that $(A,M)$ is a minimal abstract algebra for which, for some $n$, the set $M_k$ of
$k$-ary operations in $M$ is partially well-ordered for each $k=0,1,\ldots,n$ and empty for $k>n$. Then $(A,M)$
is partially well-ordered under any divisibility ordering compatible with the orders of $M_k$.
\end{theorem}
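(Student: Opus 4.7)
The plan is to run the classical Nash-Williams minimal-bad-sequence argument on $A$. Call a sequence $(a_i)_{i\geq 1}$ in $A$ \emph{bad} (with respect to the candidate divisibility order $\leq_A$) if there is no pair $i<j$ with $a_i\leq_A a_j$; the goal is to derive a contradiction from the existence of any bad sequence. Because $(A,M)$ is minimal, every element is built up by finitely many applications of operations in $M$ starting from $0$-ary operations, so there is a well-founded rank $\rho(a)$ equal to the least number of operations needed to construct $a$. We choose a bad sequence $a_1,a_2,\ldots$ in rank-lexicographic minimal fashion: pick $a_1$ of smallest possible rank that begins some bad sequence, then $a_2$ of smallest possible rank such that $a_1,a_2$ can be extended to a bad sequence, and so on.

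For each $i$, write $a_i=\mu_i(a_{i,1},\ldots,a_{i,k_i})$, where $\mu_i\in M_{k_i}$ and each part $a_{i,j}$ has strictly smaller rank than $a_i$. Let $B=\{a_{i,j}:i\geq 1,\,1\leq j\leq k_i\}\subseteq A$. The first key step is to prove that $B$ is partially well-ordered by (the restriction of) $\leq_A$. If not, there is a bad sequence $b_1,b_2,\ldots$ in $B$; letting $\ell$ be the smallest first index such that some $b_r$ equals $a_{\ell,j}$, the hybrid sequence $a_1,\ldots,a_{\ell-1},b_r,b_{r+1},\ldots$ is still bad (any $a_i\leq_A b_s$ with $i<\ell$ would, together with $b_s\leq_A a_{i_s}$ by the second divisibility axiom, contradict the badness of the original $(a_i)$), and the new $\ell$th entry $b_r$ has strictly smaller rank than $a_\ell$, contradicting the minimal choice of $a_\ell$.

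Next I pass to a subsequence along which the structural data of the $a_i$ stabilise. Since each $k_i\in\{0,1,\ldots,n\}$, by the pigeonhole principle some arity $k$ occurs infinitely often, so on a subsequence we may assume $k_i=k$ for all $i$. Since $M_k$ is partially well-ordered, a standard fact (Ramsey for pwos) lets me thin further so that $\mu_1\leq_{M_k}\mu_2\leq_{M_k}\cdots$. Since $B$ is partially well-ordered by the previous step, the product $B^k$ is partially well-ordered under the componentwise order (a finite product of pwos is pwo by an easy induction), so there exist $i<j$ with $a_{i,s}\leq_A a_{j,s}$ for every $1\leq s\leq k$.

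Now I combine compatibility with the two divisibility properties: applying the first divisibility axiom coordinate-by-coordinate $k$ times gives
\[
\mu_j(a_{i,1},\ldots,a_{i,k})\leq_A \mu_j(a_{j,1},\ldots,a_{j,k})=a_j,
\]
and compatibility of $\leq_A$ with $\leq_{M_k}$, together with $\mu_i\leq_{M_k}\mu_j$, gives
\[
a_i=\mu_i(a_{i,1},\ldots,a_{i,k})\leq_A \mu_j(a_{i,1},\ldots,a_{i,k}).
\]
Chaining these yields $a_i\leq_A a_j$, contradicting badness. The degenerate case $k=0$ is handled in the same way, using only compatibility: $a_i=\mu_i()\leq_A\mu_j()=a_j$. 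The main obstacle is the minimal-bad-sequence argument establishing that $B$ is pwo; once that is done, the remainder is essentially bookkeeping plus the finite-product pwo fact. The minimality hypothesis on $(A,M)$ enters only to guarantee that the rank function exists, so that the well-founded recursion behind Nash-Williams' argument is available.
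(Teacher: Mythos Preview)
The paper does not actually prove this theorem: it is quoted as Higman's result from~\cite{higman:ordering-by-div:} and then applied, so there is no ``paper's own proof'' to compare against. What you have written is the standard Nash--Williams minimal-bad-sequence proof of Higman's theorem, and it is correct. A couple of minor points of phrasing: when you pick $\ell$ and $r$ in the $B$-is-pwo step, it is cleanest to set $\ell=\min_s i_s$ where $b_s=a_{i_s,j_s}$, and then take any $r$ with $i_r=\ell$; this guarantees $i_s\geq\ell$ for \emph{all} $s$, which is what the hybrid-sequence argument uses. Also, your rank $\rho$ is really the size of a smallest term computing $a$; with that reading the strict decrease $\rho(a_{i,j})<\rho(a_i)$ is immediate. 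With these clarifications the argument goes through as written.
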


Applying this to permutation classes, one type of operation that has been particularly amenable to this approach is the inflation of one permutation by others; inflating a permutation $\sigma$ of length $k$ by $\tau_1,\ldots,\tau_k$ may be thought of as a $k$-ary operation that acts on the permutations $\tau_1,\ldots,\tau_k$. It is clear both that inflation is compatible with the permutation containment ordering and that permutation containment is a divisibility ordering with respect to inflations of this type. To satisfy the conditions of Higman's Theorem, however, we cannot inflate arbitrarily large permutations. Roughly speaking, if a permutation class $\C$ is a subclass of some substitution-closed class $\D$ that can be expressed as the substitution closure of some finite set $X$, then Higman's Theorem can be applied to prove that $\D$ (and consequently $\C$) is partially well-ordered. Consequently, by Proposition~\ref{prop-simple-inflation}:

\begin{theorem}[Albert and Atkinson~\cite{albert:simple-permutat:}]Let $\C$ be a class containing only finitely many simple permutations. Then $\C$ is partially well-ordered.\end{theorem}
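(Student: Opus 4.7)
The plan is to apply Higman's Theorem (Theorem~\ref{thm-higman}) not to $\C$ directly, but to the substitution closure $\langle\C\rangle = \langle\si(\C)\rangle$, which contains $\C$. Since any subset of a partially well-ordered set is partially well-ordered, this suffices. The key point is that $\si(\C)$ is finite by hypothesis (and equals $\si(\langle\C\rangle)$), so the repertoire of ``building block'' operations will also be finite.

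I would take the abstract algebra $(A,M)$ to have underlying set $A = \langle\si(\C)\rangle$ and operation set
\[
M \;=\; \{\mu_1\} \,\cup\, \{\mu_\sigma : \sigma \in \si(\C),\ |\sigma|\geq 2\},
\]
where $\mu_1$ is the $0$-ary operation returning the singleton permutation $1$, and $\mu_\sigma$ is the $k$-ary inflation operation $\mu_\sigma(\tau_1,\dots,\tau_k) = \sigma[\tau_1,\dots,\tau_k]$ when $|\sigma|=k\geq 2$. The ordering on $A$ is ordinary permutation containment, and each $M_k$ is equipped with the trivial antichain order (so compatibility is automatic). By hypothesis $\si(\C)$ is finite, so each $M_k$ is finite (hence partially well-ordered) and $M_k$ is empty for $k$ exceeding the maximum length of a simple in $\si(\C)$.

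I then need to verify the three structural hypotheses of Higman's Theorem. Minimality of $(A,M)$ follows from Proposition~\ref{prop-simple-inflation} by induction on length: the singleton $1$ is supplied by $\mu_1$, and any longer permutation $\pi\in A$ equals $\sigma[\tau_1,\dots,\tau_k]$ for a unique simple $\sigma$ of length at least $2$, where each $\tau_i\in A$ is shorter than $\pi$ and therefore lies in the generated subalgebra by induction; since $\si(A) = \si(\C)$, every such $\sigma$ supplies an operation in $M$. Divisibility is essentially tautological for inflation: a copy of $a$ inside the interval filling the $i$th position of $\sigma[\dots,a,\dots]$ is preserved when $a$ is replaced by any $b\geq a$, giving both the monotonicity axiom and the containment $a \leq \sigma[\dots,a,\dots]$.

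The main potential obstacle is purely bookkeeping: making sure the algebra machinery of Higman's Theorem is set up so that the generation principle really does reach every element of $\langle\si(\C)\rangle$ (which is why $\mu_1$ must be included as a $0$-ary operation), and that the simples appearing in iterated inflations stay inside the finite set $\si(\C)$ --- the latter being precisely the content of $\si(\langle\C\rangle) = \si(\C)$ noted after Proposition~\ref{prop-simple-inflation}. Once these points are checked, Higman's Theorem delivers that $\langle\si(\C)\rangle$ is partially well-ordered under containment, and the theorem follows since $\C \subseteq \langle\si(\C)\rangle$.
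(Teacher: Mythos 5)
Your proposal is correct and follows essentially the same route as the paper: the paper's own justification is precisely to apply Higman's Theorem to the substitution closure of the finite set of simple permutations, viewing inflations $\sigma[\tau_1,\ldots,\tau_k]$ for $\sigma\in\si(\C)$ as $k$-ary operations with permutation containment as a compatible divisibility order, with generation guaranteed by Proposition~\ref{prop-simple-inflation} and $\si(\C)=\si(\langle\C\rangle)$. Your treatment of the $0$-ary operation and the minimality/divisibility checks just makes explicit the bookkeeping the paper leaves as a sketch.
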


On the other hand, since any set $X$ satisfying $\langle X\rangle=\langle\C\rangle$ must contain every permutation in $\si(\C)$, we cannot arrange that $X$ is finite when $\C$ contains infinitely many simple permutations, and Higman's Theorem cannot be used in this way. This, however, does not mean that any class containing infinitely many simple permutations is not partially well-ordered: for example, $\grid(\av{21}\quad\av{21})$ is partially well-ordered by Theorem~\ref{thm-murphy-vatter}, but contains arbitrarily long simple permutations of the form $2\ 4\ 6\cdots 2k\ 1\ 3\ 5\cdots 2k-1$.

Let us now extend this use of Higman's Theorem to gridding matrices. We first define an order on the set of $m\times n$-gridded permutations. For $m\times n$-gridded permutations $\alpha$ and $\pi$ of lengths $k$ and $\ell$, respectively, we say that $\alpha$ is contained in $\pi$, $\alpha\leq_{mn}\pi$, if and only if there is a sequence of indices $1\leq i_1<\cdots< i_k\leq \ell$ such that $\pi(i_1)\cdots\pi(i_k)$ is order isomorphic to $\alpha$ as ungridded permutations, and for $j=1,\ldots,k$, $\pi(i_j)$ and $\alpha(j)$ lie in the same cell in the $m\times n$-griddings. Similarly, for a specific $m\times n$ gridding matrix $\M$ and $\M$-gridded permutations $\alpha$ and $\pi$, we write $\alpha\leq_\M\pi$ to mean $\alpha\leq_{mn}\pi$, but also recognising that both $\alpha$ and $\pi$ are $\M$-gridded.

Suppose that $\M$ is a gridding matrix whose graph is acyclic, and every non-empty cell of $\M$ is labelled by a monotone class, except for the $uv$th cell which is labelled by some arbitrary class $\D$. Viewing $G_\M$ as a tree rooted on the $uv$th cell, each cell other than the $uv$th is the \emph{child} of some \emph{parent} cell, i.e.\ the cell lying directly above it in the rooted tree.

Now let $\tau_1,\ldots,\tau_k$ be $\M$-gridded permutations each with at least one point in cell $uv$, and let $\sigma\in\D$ be of length $k$.  The \emph{$\M$-inflation of $\sigma$ by $\tau_1,\ldots,\tau_k$} is the $\M$-gridded permutation $\pi = \sigma[\tau_1,\ldots,\tau_k]_\M$, and is formed by first taking the inflation $\pi^{uv}=\sigma[\tau_1^{uv},\ldots,\tau_k^{uv}]$. For every other non-empty cell $st$ of $\M$, if $\M_{st}=\av{21}$ then $\pi^{st}$ is a (posibly lenient) inflation of $1\cdots k$ by $\tau_1^{st},\ldots,\tau_k^{st}$ in some order, while if $\M_{st}=\av{12}$ then $\pi^{st}$ is a (possibly lenient) inflation of $k\cdots 1$ by $\tau_1^{st},\ldots,\tau_k^{st}$ in some order. In either case, the order in which $\tau_1^{st},\ldots,\tau_k^{st}$ appears in the inflation is defined recursively in terms of its parent cell $\pi^{s't'}$ (where either $s=s'$ or $t=t'$) in the tree $G_\M$ rooted on cell $uv$: if cells $st$ and $s't'$ share a column (i.e.\ if $s=s'$) then reading from left-to-right the $\tau_i^{st}$ appear in the same order as the $\tau_i^{s't'}$. Similarly, if $t=t'$ then reading from bottom-to-top the $\tau_i^{st}$ appear in the same order as the $\tau_i^{s't'}$. For each $i\in[k]$, the positions of the points in $\tau_i^{st}$ relative to the points in $\tau_i^{s't'}$ are exactly the same as the corresponding points in the $\M$-gridded permutation $\tau_i$. Finally, for each $i\in [k]$, $\tau_i^{st}$ interacts with no other $\tau_{j}^{s't'}$, $j\neq i$: i.e.\ all the points in $\tau_i^{st}$ are above or below, and to the left or to the right of all points in each $\tau_{j}^{s't'}$. Note that we must remember the left-to-right and bottom-to-top order of $\tau_1^{st},\ldots,\tau_k^{st}$ in every non-empty cell $st$ of $\M$ even if one or more of the $\tau_i^{st}$ contains no points, so that we know the order of the cells for any subsequent descendants. See Figure~\ref{fig-gridinflation} for an illustration.

A \emph{lenient $\M$-inflation of $\sigma$ by $\tau_1,\ldots,\tau_k$} is defined in exactly the same way, except that we do not stipulate that each $\tau_i^{uv}$ be non-empty.

\begin{figure}
\begin{center}
\psset{xunit=0.012in, yunit=0.012in}
\psset{linewidth=0.005in}
\begin{pspicture}(0,0)(320,240)
\psline[linestyle=dashed](0,80)(320,80)
\psline[linestyle=dashed](0,160)(320,160)
\psline[linestyle=dashed](80,0)(80,240)
\psline[linestyle=dashed](160,0)(160,240)
\psline[linestyle=dashed](240,0)(240,240)
\psline[linestyle=dotted](0,20)(320,20)
\psline[linestyle=dotted](0,40)(320,40)
\psline[linestyle=dotted](0,60)(320,60)
\psline[linestyle=dotted](0,100)(320,100)
\psline[linestyle=dotted](0,120)(320,120)
\psline[linestyle=dotted](0,140)(320,140)
\psline[linestyle=dotted](0,180)(320,180)
\psline[linestyle=dotted](0,200)(320,200)
\psline[linestyle=dotted](0,220)(320,220)
\psline[linestyle=dotted](20,0)(20,240)
\psline[linestyle=dotted](40,0)(40,240)
\psline[linestyle=dotted](60,0)(60,240)
\psline[linestyle=dotted](100,0)(100,240)
\psline[linestyle=dotted](120,0)(120,240)
\psline[linestyle=dotted](140,0)(140,240)
\psline[linestyle=dotted](180,0)(180,240)
\psline[linestyle=dotted](200,0)(200,240)
\psline[linestyle=dotted](220,0)(220,240)
\psline[linestyle=dotted](260,0)(260,240)
\psline[linestyle=dotted](280,0)(280,240)
\psline[linestyle=dotted](300,0)(300,240)
{
\psset{fillstyle=solid,fillcolor=white,cornersize=absolute,linearc=1pt,linewidth=0.005in,linestyle=solid,linecolor=black}
  \psframe(0,140)(20,160)\psframe(20,120)(40,140)
  \psframe[fillcolor=lightgray](40,100)(60,120)\psframe(60,80)(80,100)
  \psframe(80,160)(100,180)\psframe(100,180)(120,200)
  \psframe[fillcolor=lightgray](120,200)(140,220)\psframe(140,220)(160,240)
  \psframe[fillcolor=lightgray](160,0)(180,20)\psframe[fillcolor=lightgray](160,100)(180,120)
  \psframe(180,20)(200,40)\psframe(180,140)(200,160)
  \psframe(200,40)(220,60)\psframe(200,80)(220,100)
  \psframe(220,60)(240,80)\psframe(220,120)(240,140)
  \psframe(240,80)(260,100)\psframe(240,220)(260,240)
  \psframe[fillcolor=lightgray](260,100)(280,120)\psframe[fillcolor=lightgray](260,200)(280,220)
  \psframe(280,120)(300,140)\psframe(280,180)(300,200)
  \psframe(300,140)(320,160)\psframe(300,160)(320,180)
}
{\small%
\rput[c](10,150){$\tau_2^{12}$}\rput[c](30,130){$\tau_4^{12}$}
\rput[c](50,110){$\tau_1^{12}$}\rput[c](70,90){$\tau_3^{12}$}
\rput[c](90,170){$\tau_2^{23}$}\rput[c](110,190){$\tau_4^{23}$}
\rput[c](130,210){$\tau_1^{23}$}\rput[c](150,230){$\tau_3^{23}$}
\rput[c](170,10){$\tau_1^{31}$}\rput[c](170,110){$\tau_1^{32}$}
\rput[c](190,30){$\tau_2^{31}$}\rput[c](190,150){$\tau_2^{32}$}
\rput[c](210,50){$\tau_3^{31}$}\rput[c](210,90){$\tau_3^{32}$}
\rput[c](230,70){$\tau_4^{31}$}\rput[c](230,130){$\tau_4^{32}$}
\rput[c](250,90){$\tau_3^{42}$}\rput[c](250,230){$\tau_3^{43}$}
\rput[c](270,110){$\tau_1^{42}$}\rput[c](270,210){$\tau_1^{43}$}
\rput[c](290,130){$\tau_4^{42}$}\rput[c](290,190){$\tau_4^{43}$}
\rput[c](310,150){$\tau_2^{42}$}\rput[c](310,170){$\tau_2^{43}$}}
\end{pspicture}
\end{center}
\caption{Forming the $\M$-inflation $2413[\tau_1,\tau_2,\tau_3,\tau_4]_\M$ for a $4\times 3$ gridding matrix $\M$. The highlighted cells correspond to the $\M$-gridded permutation $\tau_1$.}
\label{fig-gridinflation}
\end{figure}

\begin{proof}[Proof of Theorem~\ref{thm-grid-pwo}] By Proposition~\ref{prop-component}, we may assume that $G_\M$ consists of exactly one component. Thus $\M$ is an $m\times n$ gridding matrix such that $G_\M$ is a tree and every non-empty cell of $\M$ is labelled by a monotone class, except for the $uv$th cell which is labelled by some class $\D$ containing only finitely many simple permutations. We will also assume that $\D$ is substitution closed, as otherwise we may replace it with $\langle\D\rangle$ and prove the result for this larger class.

For each $\sigma\in\si(\D)$ of length $k$, we view an $\M$-inflation of $\sigma$ as a $k$-ary operation. We claim that $\grid(\M)$ is generated by this finite list of $\M$-inflations and all the $\M$-griddings of the singleton permutation $1$. It will then follow by Higman's Theorem~\ref{thm-higman} that $\grid(\M)$ is partially well-ordered. We will prove the result for $\M$-gridded permutations, and then the result for ungridded permutations in $\grid(\M)$ will follow by applying the homomorphism that removes the gridlines.

We proceed by induction on the length of $\M$-gridded permutations. As we already have all the $\M$-gridded permutations of length $1$, it is enough to show that any $\M$-gridded $\pi\in\grid(\M)$ with $|\pi|\geq 2$ can be expressed as an $\M$-inflation of some $\sigma\in\si(\D)$. Given one such $\pi$, suppose first that $\pi^{uv}$ contains at least two points. By Proposition~\ref{prop-simple-inflation} there exists some $\sigma\in\si(\D)$ such that $\pi^{uv}$ is an inflation of $\sigma$, i.e.\  $\pi^{uv}=\sigma[\tau_1^{uv},\ldots,\tau_k^{uv}]$, for some permutations $\tau_1^{uv},\ldots,\tau_k^{uv}$. Label each point of $\pi^{uv}$ with the symbol from $1,\ldots,k$ corresponding to which of $\tau_1^{uv},\ldots,\tau_k^{uv}$ it belongs. We now label each cell recursively, working down the tree $G_\M$ rooted at the cell $uv$. Consider a cell $st$ whose parent $rw$ has been labelled. We will label each point $p$ in $\pi^{st}$ as follows:
\begin{itemize}
\item If the child shares a column with its parent (i.e.\ $r=s$), then $p$ is assigned the same label as the rightmost point in $\pi^{rw}$ that lies to its left. If there is no point in $\pi^{rw}$ to the left of $p$, give $p$ the label of the leftmost point of $\pi^{rw}$. If there are no points in $\pi^{rw}$, label every point of $\pi^{st}$ with the label $1$.
\item If the child shares a row with its parent (i.e.\ $t=w$), then $p$ is assigned the same label as the highest point of $\pi^{rw}$ that lies below it. If there is no such point in $\pi^{rw}$, give $p$ the label of the lowest point of $\pi^{rw}$. If there are no points in $\pi^{rw}$, label every point of $\pi^{st}$ with the label $1$.
\end{itemize}
For each $i\in[k]$, now create the $\M$-gridded permutation $\tau_i$ by taking all points of $\pi$ with label $i$. It is now clear to see that $\pi=\sigma[\tau_1,\ldots,\tau_k]_\M$, as required.

This leaves the case where $\pi^{uv}$ contains a singleton or is empty. Since $|\pi|\geq 2$, either there is a cell of $\pi$ containing at least two points, or there are at least two non-empty cells. If there is a cell $\pi^{st}$ containing at least two points, label the leftmost point with the label $1$ and all other points in this cell with label $2$. Then view $G_\M$ as a tree rooted at the cell $st$ and label the points in the cells of $\pi$ recursively as described above. Using these labels, now form the gridded permutations $\tau_1$ and $\tau_2$ as before, and observe that $\pi$ is a lenient $\M$-inflation of $12$ or $21$ with $\tau_1$ and $\tau_2$, in some order.

Finally, if all of the non-empty cells of $\pi$ contain only one point, then label the point in any one non-empty cell of $\pi$ with the symbol $1$ and the point in any other non-empty cell with the symbol $2$. Now assign every other point in every other cell either the label $1$ or $2$ in such a way that, forming the gridded permutations $\tau_1$ and $\tau_2$ from the labels, $\pi$ can be expressed as a lenient inflation of $12$ or $21$ by $\tau_1$ and $\tau_2$ in some order.
\end{proof}

\section{Grid Classes by Symmetry}\label{sec-symmetry}

For the remainder of this paper we will be working towards proving Theorem~\ref{thm-not-pwo} by showing that certain types of grid class are not partially well-ordered. Among these non-partially well-ordered grid classes will be those needed to prove the remaining direction of Theorem~\ref{thm-fin-simple-grid}. We begin by showing how we may divide grid classes into families using ``grid mappings''.

Let $\M$ be an $m\times n$ gridding matrix, and let $\pi$ be an $\M$-gridded permutation. Recall that the \emph{inverse} of a permutation $\pi$ is $\pi^{-1}$, defined by $\pi^{-1}(i)=j$ if and only if $\pi(j)=i$, and we extend this in two ways: first to an $\M$-gridded permutation $\pi$ by mapping any vertical line between positions $i$ and $i+1$ ($i=0,\ldots,n$) to a horizontal line between values $i$ and $i+1$ and vice versa, and second to a permutation class $\C$ by setting $\C^{-1}=\{\pi^{-1}:\pi\in\C\}$. We consider the effect of taking the inverse of $\pi$ on the gridding of $\pi$, and consequently the effect on $\M$ of taking the inverse of $\grid(\M)$.

\begin{lemma}Let $\M$ be an $m\times n$ gridding matrix. Then $\grid(\M)^{-1} =\grid(\phi(\M))$ where $\phi(\M)$ is defined by $(\phi(\M))_{ij}=\M_{ji}^{-1}$.
\end{lemma}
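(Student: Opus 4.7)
The plan is to exploit the fact that inversion of a permutation corresponds graphically to reflecting its plot across the diagonal line $y = x$; with this viewpoint the lemma becomes a bookkeeping statement about how an $\M$-gridding of $\pi$ transforms into a $\phi(\M)$-gridding of $\pi^{-1}$.

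First I would record the graphical picture. Under reflection across $y = x$, a point $(i,\pi(i))$ of the plot of $\pi$ is sent to $(\pi(i), i)$, which is a point of the plot of $\pi^{-1}$. Meanwhile, every vertical line $x = c$ is carried to the horizontal line $y = c$ and every horizontal line $y = c$ to the vertical line $x = c$. So an $m \times n$ gridding of $\pi$ consisting of $m-1$ vertical and $n-1$ horizontal gridlines is carried to an $n \times m$ gridding of $\pi^{-1}$ consisting of $n-1$ vertical and $m-1$ horizontal gridlines.

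Next I would track what happens to individual cells. The cell in column $i$ and row $j$ of the $\M$-gridding --- the rectangle bounded by vertical lines $i-1, i$ and horizontal lines $j-1, j$ --- is sent by the reflection to the rectangle bounded by horizontal lines $i-1, i$ and vertical lines $j-1, j$, which is precisely the cell in column $j$ and row $i$ of the new gridding. A quick check against the definition of order isomorphism shows that reflection across $y = x$ sends a generic point set order-isomorphic to $\tau$ to one order-isomorphic to $\tau^{-1}$. Thus the contents of cell $(i,j)$ of $\pi$, which by hypothesis lie in $\M_{ij}$, correspond after inversion to the contents of cell $(j,i)$ of $\pi^{-1}$, which therefore lie in $\M_{ij}^{-1}$. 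Since $(\phi(\M))_{ji} = \M_{ij}^{-1}$ by definition, the induced gridding of $\pi^{-1}$ is a $\phi(\M)$-gridding, and hence $\grid(\M)^{-1} \subseteq \grid(\phi(\M))$.

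For the reverse inclusion I would just note that $\phi$ is an involution: $(\phi(\phi(\M)))_{ij} = (\phi(\M))_{ji}^{-1} = (\M_{ij}^{-1})^{-1} = \M_{ij}$. Applying the forward direction to $\phi(\M)$ in place of $\M$ and then inverting both sides yields $\grid(\phi(\M)) \subseteq \grid(\M)^{-1}$. There is no genuine obstacle in any of this; the only point worth being careful about is the paper's indexing convention that the $ij$th entry sits in column $i$ and row $j$, which is exactly what produces the transpose in the definition $(\phi(\M))_{ij} = \M_{ji}^{-1}$.
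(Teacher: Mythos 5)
Your proof is correct and follows essentially the same route as the paper: both verify one inclusion by inverting a gridded permutation (cell $(i,j)$ maps to cell $(j,i)$ with contents inverted, landing in $(\phi(\M))_{ji}=\M_{ij}^{-1}$) and obtain the other inclusion from the involution $\phi(\phi(\M))=\M$. The reflection-across-$y=x$ picture is just an explicit phrasing of the paper's ``apply the inverse operation to the gridded permutation'' step.
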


We will call the map $\phi$ the \emph{grid inverse} map.

\begin{proof}
First note that $\phi(\phi(\M))=\M$, so it suffices to show that $\grid(\M)^{-1}\subseteq\grid(\phi(\M))$. Let $\pi$ be any permutation in $\grid(\M)^{-1}$, so $\pi^{-1}\in\grid(\M)$ is $\M$-griddable. Pick any $\M$-gridding of $\pi^{-1}$, and apply the inverse operation to this gridded matrix to recover a gridding of $\pi$. By definition, all points of the $ij$th cell of the gridded version of $\pi^{-1}$ are mapped under inverse to the $ji$th cell of the gridded $\pi$. Moreover, if the points in the $ij$th cell of $\pi^{-1}$ form the permutation $\sigma$, then it is clear that $\sigma^{-1}$ is the permutation formed by the points in the $ji$th cell of $\pi$, and so $\pi\in\grid(\phi(\M))$.
\end{proof}

Given a permutation $\pi$ of length $k$, the \emph{reverse} of $\pi$, written $r(\pi)$, is the permutation obtained by reading the entries of $\pi$ from right to left, i.e.\ for $i\in[k]$, we have $r(\pi)(i)=\pi(k+1-i)$. Similarly, the \emph{complement} of $\pi$, denoted $c(\pi)$, is formed by reading the permutation from top to bottom, i.e.\ $c(\pi)(i) = k+1-\pi(i)$. Accordingly, the \emph{reverse} of a set of permutations $X$ is $r(X)=\{r(\pi):\pi\in X\}$, and the \emph{complement} is $c(X)=\{c(\pi):\pi\in X\}$. Note in particular that if $\C=\av{B}$ is a permutation class with basis $B$ then $r(\C)=\av{r(B)}$ and $c(\C)=\av{c(B)}$.

Now let $\M$ be any $m\times n$ gridding matrix. For fixed $i\in[m]$, let $r_i(\M)$ be the \emph{$i$th column reverse} of $\M$, formed by applying the reverse map $r$ to every cell in column $i$. Thus for all $j\in[n]$, for any $i'\neq i$ we have $(r_i(\M))_{i'j}=\M_{i'j}$, while $(r_i(\M))_{ij}=r(\M_{ij})$. We define the \emph{$j$th row complement} analogously: $(c_j(\M))_{ij'}=\M_{ij'}$ whenever $j'\neq j$, and $(c_j(\M))_{ij}=c(\M_{ij})$ for all $i\in[m]$. Next, if $\mu$ is a permutation of length $m$, then let $\mu(\M)$ be the gridding matrix formed by permuting the columns of $\M$ as prescribed by $\mu$, so that $(\mu(\M))_{ij}=\M_{\mu(i)j}$. We say that $\mu$ is a \emph{permutation of the columns of $\M$}. Similarly, a \emph{permutation of the rows of $\M$} is a permutation $\nu$ of length $n$ satisfying $(\nu(\M))_{ij}=\M_{i\nu(j)}$.

We also extend the definitions of complements, reverses and permutations to gridded permutations in the obvious way. For example, if $\pi$ is a gridded permutation for which the set of points in row $j$ have values $a,a+1,\ldots,b$, then the $j$th row complement of $\pi$ is $c_j(\pi)$ defined by $c_j(\pi)(i)=b+a-\pi(i)$ if $(i,\pi(i))$ lies in row $j$, and $c_j(\pi)(i)=\pi(i)$ otherwise. See Figure~\ref{fig-row-complement}.

\begin{figure}
\begin{center}
\begin{tabular}{ccccc}
\psset{xunit=0.012in, yunit=0.012in}
\psset{linewidth=0.005in}
\begin{pspicture}(0,0)(120,120)
\psaxes[dy=10,Dy=1,dx=10,Dx=1,tickstyle=bottom,showorigin=false,labels=none](0,0)(120,120)
\pscircle*(10,50){0.04in}
\pscircle*(20,10){0.04in}
\pscircle*(30,40){0.04in}
\pscircle*(40,80){0.04in}
\pscircle*(50,90){0.04in}
\pscircle*(60,70){0.04in}
\pscircle*(70,110){0.04in}
\pscircle*(80,120){0.04in}
\pscircle*(90,20){0.04in}
\pscircle*(100,60){0.04in}
\pscircle*(110,30){0.04in}
\pscircle*(120,100){0.04in}
\psline[linestyle=dashed](0,45)(120,45)
\psline[linestyle=dashed](0,85)(120,85)
\psline[linestyle=dashed](45,0)(45,120)
\psline[linestyle=dashed](85,0)(85,120)
\end{pspicture}
&\rule{20pt}{0pt}&
\psset{xunit=0.012in, yunit=0.012in}
\psset{linewidth=0.005in}
\begin{pspicture}(0,0)(120,120)
\psaxes[dy=10,Dy=1,dx=10,Dx=1,tickstyle=bottom,showorigin=false,labels=none](0,0)(120,120)
\pscircle*(10,80){0.04in}
\pscircle*(20,10){0.04in}
\pscircle*(30,40){0.04in}
\pscircle*(40,50){0.04in}
\pscircle*(50,90){0.04in}
\pscircle*(60,60){0.04in}
\pscircle*(70,110){0.04in}
\pscircle*(80,120){0.04in}
\pscircle*(90,20){0.04in}
\pscircle*(100,70){0.04in}
\pscircle*(110,30){0.04in}
\pscircle*(120,100){0.04in}
\psline[linestyle=dashed](0,45)(120,45)
\psline[linestyle=dashed](0,85)(120,85)
\psline[linestyle=dashed](45,0)(45,120)
\psline[linestyle=dashed](85,0)(85,120)
\end{pspicture}
&\rule{20pt}{0pt}&
\psset{xunit=0.012in, yunit=0.012in}
\psset{linewidth=0.005in}
\begin{pspicture}(0,0)(120,120)
\psaxes[dy=10,Dy=1,dx=10,Dx=1,tickstyle=bottom,showorigin=false,labels=none](0,0)(120,120)
\pscircle*(10,20){0.04in}
\pscircle*(20,60){0.04in}
\pscircle*(30,30){0.04in}
\pscircle*(40,100){0.04in}
\pscircle*(50,50){0.04in}
\pscircle*(60,10){0.04in}
\pscircle*(70,40){0.04in}
\pscircle*(80,80){0.04in}
\pscircle*(90,90){0.04in}
\pscircle*(100,70){0.04in}
\pscircle*(110,110){0.04in}
\pscircle*(120,120){0.04in}
\psline[linestyle=dashed](0,45)(120,45)
\psline[linestyle=dashed](0,85)(120,85)
\psline[linestyle=dashed](45,0)(45,120)
\psline[linestyle=dashed](85,0)(85,120)
\end{pspicture}
\end{tabular}
\end{center}
\caption{From left to right, the $3\times 3$ gridded permutation $\pi=5\ 1\ 4\ 8\ 9\ 7\ 11\ 12\ 2\ 6\ 3\ 10$, the 2nd row complement $c_2(\pi)=\ 8\ 1\ 4\ 5\ 9\ 6\ 11\ 12\ 2\ 7\ 3\ 10$, and the permutation $\mu(\pi)=2\ 6\ 3\ 10\ 5\ 1\ 4\ 8\ 9\ 7\ 11\ 12$ where $\mu=312$ is a permutation of the columns.}\label{fig-row-complement}
\end{figure}
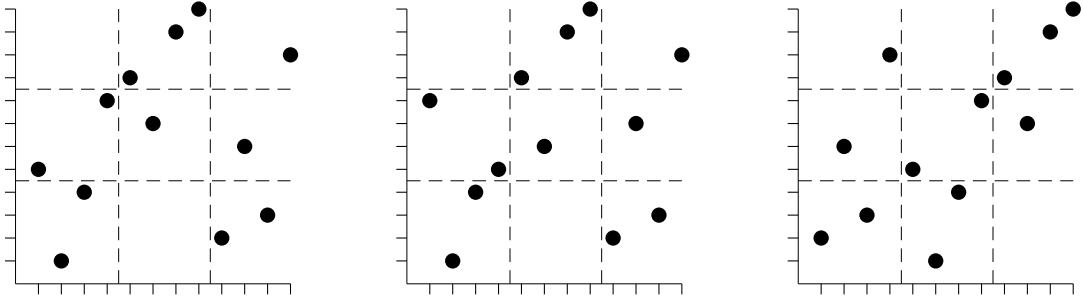

A \emph{grid mapping} is any composition of grid inverse, row complements, column reverses and row and column permutations, and we say that two matrices $\M$ and $\N$ are \emph{equivalent under the grid mapping $f$} if $f(\M)=\N$. (Note that this extends in a natural way to an equivalence relation.) Grid mappings do not in general preserve the normal permutation containment ordering, but they do respect gridded containment (defined in Section~\ref{sec-pwo-grid-classes}).

\begin{lemma}\label{lem-mapping-containment}Let $\M$ be a gridding matrix, $\alpha$ and $\pi$ two $\M$-gridded permutations and $f$ any grid mapping of $\M$. Then $\alpha\leq_\M\pi$ if and only if $f(\alpha)\leq_{f(\M)}f(\pi)$.\end{lemma}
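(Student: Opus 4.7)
The plan is to reduce the statement to the four generating operations of a grid mapping and verify each separately. Any grid mapping $f$ is by definition a finite composition of grid inverses $\phi$, column reverses $r_i$, row complements $c_j$, and row/column permutations. The property ``$\alpha \leq_\M \pi$ if and only if $g(\alpha) \leq_{g(\M)} g(\pi)$'' is closed under composition in $g$, so it suffices to verify the lemma when $f$ is a single such generator. Moreover, each generator either is an involution ($\phi$, $r_i$ and $c_j$ each square to the identity) or has an inverse of the same type ($\mu^{-1}$ for a row or column permutation $\mu$). Hence for each generator it is enough to establish the forward implication $\alpha \leq_\M \pi \Rightarrow f(\alpha) \leq_{f(\M)} f(\pi)$; the reverse direction then follows by applying the forward implication to $f^{-1}$.

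For the grid inverse $\phi$, the correspondence between $\M$-griddings of $\pi$ and $\phi(\M)$-griddings of $\pi^{-1}$ was set up in the preceding lemma via the cell swap $(i,j) \leftrightarrow (j,i)$. Since $\pi \mapsto \pi^{-1}$ is an order isomorphism of ungridded permutation containment, and sends the cell occupied by an embedded point to its swapped image, a subsequence witnessing $\alpha \leq_\M \pi$ transfers directly to one witnessing $\alpha^{-1} \leq_{\phi(\M)} \pi^{-1}$. For a row or column permutation $\mu$, the same abstract correspondence of points that embeds $\alpha$ into $\pi$ also embeds $\mu(\alpha)$ into $\mu(\pi)$: cell memberships are relabelled consistently on both sides, vertical and horizontal coordinates are either fixed or reassigned by the same rule, and when the selected points are re-sorted by their new positions (resp.\ values) the resulting subsequence is order-isomorphic to $\mu(\alpha)$ precisely because $\mu$ is applied identically to the columns (resp.\ rows) of both $\alpha$ and $\pi$.

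The column reverse case deserves the most care. Geometrically, $r_i$ is a horizontal reflection of the contiguous vertical strip comprising column $i$: it fixes cell memberships and point values, reverses horizontal positions within the strip, and leaves points outside the strip unchanged. Taking the same subset of points that witnesses $\alpha \leq_\M \pi$, the vertical order of any two embedded points is unaffected; for two embedded points at most one of which lies in column $i$, their horizontal order is unchanged because the strip is contiguous; and for two embedded points both inside column $i$, the horizontal order is consistently reversed on both the $\alpha$-side and the $\pi$-side. Hence the subsequence still witnesses $r_i(\alpha) \leq_{r_i(\M)} r_i(\pi)$, and the row complement $c_j$ is handled identically with the roles of columns/positions and rows/values interchanged. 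I expect the only non-routine point to be the bookkeeping of this reflected re-indexing, where one must make explicit how positions within the strip of column $i$ are relabelled so that the embedding remains an order-preserving injection on the new coordinates.
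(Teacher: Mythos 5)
Your proposal is correct and follows essentially the same route as the paper's proof: reduce to the generating operations, observe that the forward implication suffices because each generator is invertible, handle the grid inverse via the cell swap $(i,j)\leftrightarrow(j,i)$, and push the witnessing subsequence through column permutations and column reverses. The only cosmetic differences are that the paper obtains the row cases by conjugating the column cases with the grid inverse and reduces column permutations to transpositions, whereas you treat the row operations and general column permutations directly and spell out the column-reverse bookkeeping that the paper dismisses with ``a similar argument.''
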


\begin{proof}It suffices to show that $\alpha\leq_\M\pi$ implies $f(\alpha)\leq_{f(\M)}f(\pi)$ where $f$ is a grid inverse, row complement, column reverse, or a row or column permutation. We will consider only the grid inverse and column permutation cases, the others following by similar arguments.

Suppose that $\alpha$ is of length $k$ and $\pi$ of length $\ell$, and that the indices $1\leq i_1< \cdots< i_k\leq\ell$ give rise to a subsequence $\pi(i_1)\cdots\pi(i_k)$ that witnesses the gridded containment $\alpha\leq_\M\pi$. If $f=\phi$ is the grid inverse mapping then $\alpha\leq_\M\pi$ immediately implies $f(\alpha)=\alpha^{-1}\leq\pi^{-1}=f(\pi)$ as this is the normal inverse for permutations. Moreover, if $\pi(i_j)$ and $\alpha(j)$ ($j=1,\ldots,k$) lie in cell $st$ of $\M$, then their images under $f$ both lie in cell $ts$ of $f(\M)$, from which we conclude that $f(\alpha)\leq_{f(\M)}f(\pi)$.

Now suppose that $f$ is a column permutation, and note (by composing functions) that we can suppose that $f$ swaps two columns, $u$ and $v$, say. It is clear that the images of $\pi(i_j)$ and $\alpha(j)$ under $f$ both lie in the same cell, so it remains to show that $f(\alpha)\leq f(\pi)$ as ungridded permutations. This, however, is also straightforward: $f$ simply swaps the segments of $\alpha$ that lie in columns $u$ and $v$, and it does likewise in $\pi$. In particular, $f$ swaps the two subsequences of $\pi(i_1)\cdots\pi(i_k)$ lying in columns $u$ and $v$, and this image is a copy of $f(\alpha)$ in $f(\pi)$.
\end{proof}

We next make a simple observation, which allows us to pass between grid containment and normal permutation containment.

\begin{lemma}\label{lem-m-gridding-containment}Let $\alpha$ and $\pi$ be $\M$-griddable permutations with $\alpha\leq\pi$ as ungridded permutations. Then for any $\M$-gridding of $\pi$, there exists an $\M$-gridding of $\alpha$ such that $\alpha\leq_\M\pi$.\end{lemma}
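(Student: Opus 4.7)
The plan is to produce the $\M$-gridding of $\alpha$ directly from a witness of the ungridded containment $\alpha\leq\pi$ together with the given $\M$-gridding of $\pi$. Fix indices $1\leq i_1<\cdots<i_k\leq \ell$ (where $\ell=|\pi|$ and $k=|\alpha|$) such that $\pi(i_1)\cdots\pi(i_k)$ is order isomorphic to $\alpha$, and let $0=v_0<v_1<\cdots<v_{m-1}<v_m=\ell$ and $0=h_0<h_1<\cdots<h_{n-1}<h_n=\ell$ be the vertical and horizontal cut-points of the given $\M$-gridding of $\pi$.

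From these data I would define a gridding of $\alpha$ by setting the vertical cut-points at $a_s=\#\{j\in[k]:i_j\leq v_s\}$ for $s=1,\ldots,m-1$, and the horizontal cut-points at $b_t=\#\{j\in[k]:\pi(i_j)\leq h_t\}$ for $t=1,\ldots,n-1$. Both sequences are non-decreasing and lie in $[0,k]$, so this defines a valid $m\times n$-gridding of $\alpha$. The key computation, which is immediate from the definitions of $a_s$ and $b_t$, is that the $j$th entry of $\alpha$ lies in cell $(s,t)$ of this new gridding if and only if $\pi(i_j)$ lies in cell $(s,t)$ of the chosen $\M$-gridding of $\pi$.

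With this correspondence in place, two checks complete the argument. First, to confirm that the constructed gridding is an $\M$-gridding: for each cell $(s,t)$, the subset $\alpha^{st}$ is order isomorphic to the corresponding subset of $\pi^{st}$, and hence to a subpermutation of the permutation represented by $\pi^{st}$; since $\M_{st}$ is a permutation class (downward closed under containment) and $\pi^{st}\in\M_{st}$, we get $\alpha^{st}\in\M_{st}$. Second, the same index sequence $i_1<\cdots<i_k$ that witnessed $\alpha\leq\pi$ now witnesses $\alpha\leq_\M\pi$, because the cell-preservation property established above is exactly the extra requirement in the definition of $\leq_\M$.

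There is no real obstacle here; the only thing to be careful about is that the vertical and horizontal cut-points are chosen simultaneously and consistently, so that each point of the embedded copy truly lands in the cell matching its image in $\pi$. Once the counting definitions of $a_s$ and $b_t$ are written down, everything falls out immediately.
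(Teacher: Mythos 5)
Your proof is correct and is essentially the paper's own argument: the paper simply restricts the given $\M$-gridded permutation $\pi$ to the gridded subsequence $\pi(i_1)\cdots\pi(i_k)$, and your counting definition of the cut-points $a_s$, $b_t$ is just an explicit formalisation of that restriction, together with the (routine) checks that each cell's contents stays in $\M_{st}$ and that the same indices witness $\alpha\leq_\M\pi$.
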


\begin{proof}
This follows trivially by considering any subsequence of $\pi$ order isomorphic to $\alpha$, and then adding the $\M$-gridding to $\pi$, and hence to $\alpha$.
\end{proof}

We will use Lemma~\ref{lem-m-gridding-containment} on permutations that have a unique gridding: if $\alpha$ and $\pi$ are two permutations which have unique $\M$-griddings for some matrix $\M$, then $\alpha\not\leq_\M\pi$ implies $\alpha\not\leq\pi$. However, unique griddability is not in general preserved by grid mappings. For example, $135246$ has a unique gridding in $\grid(\av{21}\hspace{4pt}\av{21})$, but applying a column reverse to the first column yields the permutation $531246$, which can be gridded in two different ways in $\grid(\av{12}\hspace{4pt}\av{21})$. Thus, for a gridding matrix $\M$, we say that an $\M$-gridded permutation $\pi$ is \emph{strongly uniquely $\M$-griddable} if the given $\M$-gridding of $\pi$ is unique and, for every grid mapping $f$ of $\M$, $f(\pi)$ is also the unique $f(\M)$-gridding of $f(\pi)$. This extra condition gives us what we need:

\begin{theorem}\label{thm-strongly-uniquely-not-pwo}Let $\M$ be a gridding matrix, and let $A$ be an infinite antichain for which infinitely many elements are strongly uniquely $\M$-griddable. Then the grid class of any gridding matrix $\N$ that is equivalent to $\M$ under some grid mapping is not partially well-ordered.\end{theorem}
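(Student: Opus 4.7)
The plan is to produce an infinite antichain in $\grid(\N)$ by applying the grid mapping $f$ (with $f(\M)=\N$) to the strongly uniquely $\M$-griddable elements of $A$, and to verify via Lemmas~\ref{lem-mapping-containment} and~\ref{lem-m-gridding-containment} that this image remains an antichain under ordinary (ungridded) permutation containment.

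First, I would let $A'\subseteq A$ denote the (infinite) subset of elements that are strongly uniquely $\M$-griddable, equipped with their unique $\M$-griddings. Since each element of $A'$ has a unique $\M$-gridding, Lemma~\ref{lem-m-gridding-containment} gives the key equivalence on $A'$: for $\alpha,\pi\in A'$, $\alpha\leq\pi$ as ungridded permutations if and only if $\alpha\leq_\M\pi$ with respect to the unique griddings. (The ``if'' direction is trivial, and the ``only if'' direction uses Lemma~\ref{lem-m-gridding-containment} applied to the unique $\M$-gridding of $\pi$, combined with uniqueness of the gridding of $\alpha$.)

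Next, I would apply the grid mapping. By the definition of strongly unique griddability, $f(\pi)$ is the unique $\N$-gridding of $f(\pi)$ for every $\pi\in A'$; in particular $f(A')\subseteq\grid(\N)$, and grid mappings are bijections on permutations so $f(A')$ is infinite. Lemma~\ref{lem-mapping-containment} then says that for $\alpha,\pi\in A'$,
\[
\alpha\leq_\M\pi \quad\Longleftrightarrow\quad f(\alpha)\leq_\N f(\pi).
\]
Applying the same observation used for $A'$ to the strongly uniquely $\N$-griddable elements $f(\alpha),f(\pi)$, gridded $\N$-containment of $f(\alpha)$ in $f(\pi)$ is equivalent to ungridded containment $f(\alpha)\leq f(\pi)$.

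Chaining these three equivalences yields, for $\alpha,\pi\in A'$,
\[
\alpha\leq\pi \quad\Longleftrightarrow\quad f(\alpha)\leq f(\pi),
\]
so $f$ restricted to $A'$ is an order-isomorphism into $\grid(\N)$ under ordinary containment. Since $A'$ is an antichain (as a subset of the antichain $A$), $f(A')$ is an infinite antichain in $\grid(\N)$, which is therefore not partially well-ordered. I do not expect a genuine obstacle here: the notion of strong unique griddability was introduced precisely to let unique griddings survive every grid mapping; the only care required is to invoke Lemma~\ref{lem-m-gridding-containment} on both sides of the mapping so that ungridded containment can be converted to gridded containment and back again.
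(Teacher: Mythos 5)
Your proposal is correct and follows essentially the same route as the paper's own proof: restrict to the strongly uniquely $\M$-griddable elements, transfer (non-)containment through the grid mapping via Lemma~\ref{lem-mapping-containment}, and use Lemma~\ref{lem-m-gridding-containment} together with uniqueness of the griddings to pass back to ungridded containment in $\grid(\N)$. The only cosmetic difference is that you package the argument as a chain of equivalences (an order-isomorphism on the antichain), whereas the paper argues the single non-containment direction for each pair.
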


\begin{proof}First, we may assume that $A$ consists only of strongly uniquely $\M$-griddable permutations, as we may discard any elements that are not. Note that $\grid(\M)$ contains $A$ and so is not partially well-ordered. Let $f$ be any grid mapping of $\M$, and let $\N=f(\M)$.
Take any pair of distinct permutations $\alpha,\beta\in A$ (noting that $\alpha\not\leq\beta$), and equip each permutation with its unique $\M$-gridding. With these griddings $f(\alpha)$ and $f(\beta)$ are $\N$-gridded permutations, and since $\alpha$ and $\beta$ are strongly uniquely $\M$-griddable these $\N$-griddings are the unique griddings of the underlying permutations of $f(\alpha)$ and $f(\beta)$. Now, since $\alpha\not\leq\beta$ we have $\alpha\not\leq_\M\beta$, and consequently $f(\alpha)\not\leq_\N f(\beta)$ by Lemma~\ref{lem-mapping-containment}. Additionally, we have $f(\alpha)\not\leq f(\beta)$ as ungridded permutations by Lemma~\ref{lem-m-gridding-containment}. Similarly, $\beta\not\leq\alpha$ implies $f(\beta)\not\leq f(\alpha)$, and so $f(\alpha)$ and $f(\beta)$ are incomparable permutations lying in $\grid(\N)$, completing the proof.
\end{proof}

\section{A Family of Grid Matrices}\label{sec-family}

In this section we will reduce the number of gridding matrices that we need to consider to prove Theorem~\ref{thm-not-pwo} to a family in which we can easily build infinite antichains. In agreement with this Theorem, from now on we will now only consider gridding matrices where every non-empty cell is an infinite class, and for which the graph of the matrix either has a cycle, or a component with at least two non-monotone griddable classes. Let $\M$ be such a matrix. First note that if $G_\M$ contains a cycle then it contains as a subclass a cyclic monotone grid class, which is not partially well-ordered by Theorem~\ref{thm-murphy-vatter}. Thus we will assume from now on that $G_\M$ is acyclic and has a component with at least two non-monotone griddable entries.

Let $\F$ be the family of all gridding matrices $\M$ satisfying: (i) $G_\M$ is a path, (ii) the cells corresponding to the end points of $G_\M$ are either $\oplus 21$ or $\ominus 12$, and (iii) the cells corresponding to the internal vertices are monotone. First, we observe that it suffices to show that every grid class of a matrix from this family $\F$ contains an infinite antichain:

\begin{lemma}\label{lem-down-to-f}Every grid class $\grid(\M)$, for which $G_\M$ has a component with at least two non-monotone-griddable classes, contains $\grid(\M')$ for some $\M'\in\F$.\end{lemma}

\begin{proof}
Pick any two non-monotone-griddable cells lying in the same component of $G_\M$, and consider any path in $G_\M$ between these two cells. By Corollary~\ref{cor-non-monotone}, each of the two selected non-monotone-griddable cells contain $\oplus 21$ or $\ominus 12$, and all other cells on the path contain $\av{12}$ or $\av{21}$ (by Erd\H{o}s and Szekeres~\cite{es:acpig}, since we have assumed these cells are infinite). The Lemma now follows routinely by setting all cells of $\M$ not on this path to be empty (and deleting any resulting empty rows and columns), and replacing the cells on this path with appropriate subclasses.
\end{proof}

Now that we have the family of matrices $\F$, we consider the effect of grid mappings on this family. As observed earlier, grid mappings define an equivalence relation, and we wish to find a suitable representative from each class in which to build an antichain that satisfies Theorem~\ref{thm-strongly-uniquely-not-pwo}.

Let $\C=\D^+=\oplus 21$ and $\D^-=\ominus 12$. For $k\in\mathbb{N}$ define $\M^{k}$ recursively as follows:
\begin{itemize}
\item $\M^1=\left( \C\hspace{6pt} \D^-\right)$.
\item When $k=4\ell+1$, $\M^{k}$ is a $(2\ell+2)\times(2\ell+1)$ matrix with $\M^{k}_{ij}=\M^{k-1}_{ij}$ for all $i\in[1,2\ell+1],j\in[2,2\ell+1]$; $\M^{k}_{(2\ell+2)1}=\D^-$; $\M^{k}_{11}=\av{21}$; and all other entries are $\varnothing$.

\item When $k=4\ell+2$, $\M^{k}$ is a $(2\ell+2)\times(2\ell+2)$ matrix with $\M^{k}_{ij}=\M^{k-1}_{ij}$ for all $i,j\in[1,2\ell+1]$; $\M^{k}_{(2\ell+2)(2\ell+2)}=\D^+$; $\M^{k}_{(2\ell+2)1}=\av{12}$; and all other entries are $\varnothing$.

\item When $k=4\ell+3$, $\M^{k}$ is a $(2\ell+3)\times(2\ell+2)$ matrix with $\M^{k}_{ij}=\M^{k-1}_{(i-1)j}$ for all $i\in[2,2\ell+3],j\in[1,2\ell+1]$; $\M^{k}_{1(2\ell+2)}=\D^-$; $\M^{k}_{(2\ell+3)(2\ell+2)}=\av{21}$; and all other entries are $\varnothing$.

\item When $k=4\ell+4$, $\M^{k}$ is a $(2\ell+3)\times(2\ell+3)$ matrix with $\M^{k}_{ij}=\M^{k-1}_{i(j-1)}$ for all $i\in[1,2\ell+2],j\in[2,2\ell+3]$; $\M^{k}_{11}=\D^+$; $\M^{k}_{1(2\ell+3)}=\av{12}$; and all other entries are $\varnothing$.
\end{itemize}
Suppressing the labels of empty cells, the first few such matrices are:
\begin{align*}
\M^1&=\left(\C\hspace{6pt}\D^- \right)\\
\M^2&=\left(\begin{matrix}&\D^+\\\C&\av{12}\end{matrix}\right)\\
\M^3&=\left(\begin{matrix}\D^-&&\av{21}\\&\C&\av{12}\end{matrix}\right)\\
\M^4&=\left(\begin{matrix}\av{12}&&\av{21}\\&\C&\av{12}\\\D^+\end{matrix}\right)\\
\M^5&=\left(\begin{matrix}\av{12}&&\av{21}\\&\C&\av{12}\\\av{21}&&&\D^-\end{matrix}\right).
\end{align*}
Note that $G_{\M^k}$ is a path of length $k$, one end of which is labelled by $\C$ and the other by either $\D^-$ or $\D^+$, and whose internal vertices are labelled by $\av{21}$ or $\av{12}$.

\begin{theorem}\label{thm-matrix-symmetry}Every gridding matrix $\M\in \F$ is equivalent under some grid mapping to some $\M^k$.\end{theorem}

\begin{proof}
We will form the grid mapping $f:\M^k\rightarrow\M$ in three stages. First, we select $k$ and check whether we need to apply the grid inverse map to $\M^k$ so that it has the same dimensions as $\M$. Next, we permute the rows and columns of $\M^k$ (or $\phi(\M^k)$) to form an intermediate matrix $\N$ that has empty cells in exactly the same positions as $\M$. Finally, we use row complements and column reversals on $\N$ to match the non-empty cells to those of $\M$.

We pick $k$ so that $\M^k$ or its transpose has the same dimensions as $\M$. There are precisely two cases where we need to apply the grid inverse map $\phi$: The first is if $\M$ is not square and it has the same dimensions as the transpose of $\M$. The second case is where $\M$ is square, but both the first and last edges in the path $G_\M$ correspond to pairs of vertices which share a column: when $\M^k$ is square, both the first and last edges in the path $G_{\M^k}$ arise from pairs of vertices which share a row, hence the need for the grid inverse map.

Suppose without loss that we did not need to apply the grid inverse map. We now need to permute the rows and columns of $\M^k$ so that the non-empty cells are moved to the same places as those of $\M$. This, however, is straightforward: the graphs of the matrices $\M$ and $\M^k$ are the same, so we simply have to apply row and column transpositions to move each non-empty cell of $\M^k$ in turn to the required position. Call the resulting matrix $\N$.

All that remains is to fix the non-empty cells of $\N$ to have the same labels as $\M$. This can be done by starting at one end of the path, and applying successive row complements and/or column reverses so that each non-empty cell in turn is labelled correctly.
\end{proof}
\section{Grid Pin Sequences and Antichains}\label{sec-grid-antichains}

By the results of the previous section, to complete the proof of Theorem~\ref{thm-not-pwo}, all we require by Theorems~\ref{thm-strongly-uniquely-not-pwo} and~\ref{thm-matrix-symmetry} is to find a strongly uniquely $\M^k$-griddable antichain for each $k\in \mathbb{N}$.
The reason we chose the matrices $\M^k$ is that they admit antichains that are easily described in terms of ``grid pin sequences''. We now define these pin sequences, and prove some elementary results about them that should assist in our description of the antichains we wish to construct --- it is not our aim here to produce a complete theory of these sequences.

\begin{figure}
\begin{center}
\begin{tabular}{ccccc}
\psset{xunit=0.015in, yunit=0.015in}
\psset{linewidth=0.005in}
\begin{pspicture}(0,0)(110,110)
\psframe[linecolor=darkgray,fillstyle=solid,fillcolor=lightgray,linewidth=0.01in](25,95)(43,103)
\psline[linestyle=dashed](0,45)(110,45)
\psline[linestyle=dashed](0,95)(110,95)
\psline[linestyle=dashed](25,0)(25,110)
\psline[linestyle=dashed](65,0)(65,110)
\pscircle(25,95){0.04in}
\pscircle*(40,100){0.04in}
\pscircle*(30,80){0.04in}
\psline(30,80)(30,100)
\rput[l](45,100){$p_1$}
\rput[c](30,73){$p_2$}
\end{pspicture}
&\rule{3pt}{0pt}&
\psset{xunit=0.015in, yunit=0.015in}
\psset{linewidth=0.005in}
\begin{pspicture}(0,0)(110,110)
\psframe[linecolor=darkgray,fillstyle=solid,fillcolor=lightgray,linewidth=0.01in](27,77)(43,103)
\psline[linestyle=dashed](0,45)(110,45)
\psline[linestyle=dashed](0,95)(110,95)
\psline[linestyle=dashed](25,0)(25,110)
\psline[linestyle=dashed](65,0)(65,110)
\pscircle(25,95){0.04in}
\pscircle*(40,100){0.04in}
\pscircle*(30,80){0.04in}
\pscircle*(80,90){0.04in}
\psline(30,80)(30,100)
\psline(27,90)(80,90)
\rput[l](45,100){$p_1$}
\rput[c](30,73){$p_2$}
\rput[l](85,90){$p_3$}
\end{pspicture}
&\rule{3pt}{0pt}&
\psset{xunit=0.015in, yunit=0.015in}
\psset{linewidth=0.005in}
\begin{pspicture}(0,0)(110,110)
\psframe[linecolor=darkgray,fillstyle=solid,fillcolor=lightgray,linewidth=0.01in](27,77)(83,93)
\psline[linestyle=dashed](0,45)(110,45)
\psline[linestyle=dashed](0,95)(110,95)
\psline[linestyle=dashed](25,0)(25,110)
\psline[linestyle=dashed](65,0)(65,110)
\pscircle(25,95){0.04in}
\pscircle*(40,100){0.04in}
\pscircle*(30,80){0.04in}
\pscircle*(80,90){0.04in}
\pscircle*(70,60){0.04in}
\psline(30,80)(30,100)
\psline(27,90)(80,90)
\psline(70,95)(70,60)
\rput[c](30,73){$p_2$}
\rput[l](85,90){$p_3$}
\rput[l](70,53){$p_4$}
\end{pspicture}
\\\\
\psset{xunit=0.015in, yunit=0.015in}
\psset{linewidth=0.005in}
\begin{pspicture}(0,0)(110,110)
\psframe[linecolor=darkgray,fillstyle=solid,fillcolor=lightgray,linewidth=0.01in](67,57)(83,93)
\psline[linestyle=dashed](0,45)(110,45)
\psline[linestyle=dashed](0,95)(110,95)
\psline[linestyle=dashed](25,0)(25,110)
\psline[linestyle=dashed](65,0)(65,110)
\pscircle(25,95){0.04in}
\pscircle*(40,100){0.04in}
\pscircle*(30,80){0.04in}
\pscircle*(80,90){0.04in}
\pscircle*(70,60){0.04in}
\pscircle*(10,70){0.04in}
\psline(30,80)(30,100)
\psline(27,90)(80,90)
\psline(70,95)(70,60)
\psline(10,70)(75,70)
\rput[l](85,90){$p_3$}
\rput[c](70,53){$p_4$}
\rput[r](5,70){$p_5$}
\end{pspicture}
&&
\psset{xunit=0.015in, yunit=0.015in}
\psset{linewidth=0.005in}
\begin{pspicture}(0,0)(110,110)
\psframe[linecolor=darkgray,fillstyle=solid,fillcolor=lightgray,linewidth=0.01in](7,57)(73,73)
\psline[linestyle=dashed](0,45)(110,45)
\psline[linestyle=dashed](0,95)(110,95)
\psline[linestyle=dashed](25,0)(25,110)
\psline[linestyle=dashed](65,0)(65,110)
\pscircle(25,95){0.04in}
\pscircle*(40,100){0.04in}
\pscircle*(30,80){0.04in}
\pscircle*(80,90){0.04in}
\pscircle*(70,60){0.04in}
\pscircle*(10,70){0.04in}
\pscircle*(20,30){0.04in}
\psline(30,80)(30,100)
\psline(27,90)(80,90)
\psline(70,95)(70,60)
\psline(10,70)(75,70)
\psline(20,30)(20,75)
\rput[c](70,53){$p_4$}
\rput[r](5,70){$p_5$}
\rput[c](20,23){$p_6$}
\end{pspicture}
&&
\psset{xunit=0.015in, yunit=0.015in}
\psset{linewidth=0.005in}
\begin{pspicture}(0,0)(110,110)
\psframe[linecolor=darkgray,fillstyle=solid,fillcolor=lightgray,linewidth=0.01in](7,27)(23,73)
\psline[linestyle=dashed](0,45)(110,45)
\psline[linestyle=dashed](0,95)(110,95)
\psline[linestyle=dashed](25,0)(25,110)
\psline[linestyle=dashed](65,0)(65,110)
\pscircle(25,95){0.04in}
\pscircle*(40,100){0.04in}
\pscircle*(30,80){0.04in}
\pscircle*(80,90){0.04in}
\pscircle*(70,60){0.04in}
\pscircle*(10,70){0.04in}
\pscircle*(20,30){0.04in}
\pscircle*(60,40){0.04in}
\psline(30,80)(30,100)
\psline(27,90)(80,90)
\psline(70,95)(70,60)
\psline(10,70)(75,70)
\psline(20,30)(20,75)
\psline(15,40)(60,40)
\rput[r](5,70){$p_5$}
\rput[c](20,23){$p_6$}
\rput[l](65,40){$p_7$}
\end{pspicture}
\\\\
\psset{xunit=0.015in, yunit=0.015in}
\psset{linewidth=0.005in}
\begin{pspicture}(0,0)(110,110)
\psframe[linecolor=darkgray,fillstyle=solid,fillcolor=lightgray,linewidth=0.01in](17,27)(63,43)
\psline[linestyle=dashed](0,45)(110,45)
\psline[linestyle=dashed](0,95)(110,95)
\psline[linestyle=dashed](25,0)(25,110)
\psline[linestyle=dashed](65,0)(65,110)
\pscircle(25,95){0.04in}
\pscircle*(40,100){0.04in}
\pscircle*(30,80){0.04in}
\pscircle*(80,90){0.04in}
\pscircle*(70,60){0.04in}
\pscircle*(10,70){0.04in}
\pscircle*(20,30){0.04in}
\pscircle*(60,40){0.04in}
\pscircle*(50,10){0.04in}
\psline(30,80)(30,100)
\psline(27,90)(80,90)
\psline(70,95)(70,60)
\psline(10,70)(75,70)
\psline(20,30)(20,75)
\psline(15,40)(60,40)
\psline(50,10)(50,45)
\rput[c](20,23){$p_6$}
\rput[l](65,40){$p_7$}
\rput[c](50,3){$p_8$}
\end{pspicture}
&&
\psset{xunit=0.015in, yunit=0.015in}
\psset{linewidth=0.005in}
\begin{pspicture}(0,0)(110,110)
\psframe[linecolor=darkgray,fillstyle=solid,fillcolor=lightgray,linewidth=0.01in](47,7)(63,43)
\psline[linestyle=dashed](0,45)(110,45)
\psline[linestyle=dashed](0,95)(110,95)
\psline[linestyle=dashed](25,0)(25,110)
\psline[linestyle=dashed](65,0)(65,110)
\pscircle(25,95){0.04in}
\pscircle*(40,100){0.04in}
\pscircle*(30,80){0.04in}
\pscircle*(80,90){0.04in}
\pscircle*(70,60){0.04in}
\pscircle*(10,70){0.04in}
\pscircle*(20,30){0.04in}
\pscircle*(60,40){0.04in}
\pscircle*(50,10){0.04in}
\pscircle*(100,20){0.04in}
\psline(30,80)(30,100)
\psline(27,90)(80,90)
\psline(70,95)(70,60)
\psline(10,70)(75,70)
\psline(20,30)(20,75)
\psline(15,40)(60,40)
\psline(50,10)(50,45)
\psline(45,20)(100,20)
\rput[l](65,40){$p_7$}
\rput[c](50,3){$p_8$}
\rput[l](105,20){$p_9$}
\end{pspicture}
&&
\psset{xunit=0.015in, yunit=0.015in}
\psset{linewidth=0.005in}
\begin{pspicture}(0,0)(110,110)
\psframe[linecolor=darkgray,fillstyle=solid,fillcolor=lightgray,linewidth=0.01in](47,7)(103,23)
\psline[linestyle=dashed](0,45)(110,45)
\psline[linestyle=dashed](0,95)(110,95)
\psline[linestyle=dashed](25,0)(25,110)
\psline[linestyle=dashed](65,0)(65,110)
\pscircle(25,95){0.04in}
\pscircle*(40,100){0.04in}
\pscircle*(30,80){0.04in}
\pscircle*(80,90){0.04in}
\pscircle*(70,60){0.04in}
\pscircle*(10,70){0.04in}
\pscircle*(20,30){0.04in}
\pscircle*(60,40){0.04in}
\pscircle*(50,10){0.04in}
\pscircle*(100,20){0.04in}
\pscircle*(90,50){0.04in}
\psline(30,80)(30,100)
\psline(27,90)(80,90)
\psline(70,95)(70,60)
\psline(10,70)(75,70)
\psline(20,30)(20,75)
\psline(15,40)(60,40)
\psline(50,10)(50,45)
\psline(45,20)(100,20)
\psline(90,15)(90,50)
\rput[c](50,3){$p_8$}
\rput[l](105,20){$p_9$}
\rput[l](93,57){$p_{10}$}
\end{pspicture}
\end{tabular}
\end{center}
\caption{A grid pin sequence on the $3\times 3$ grid.}
\label{fig-grid-pins}
\end{figure}
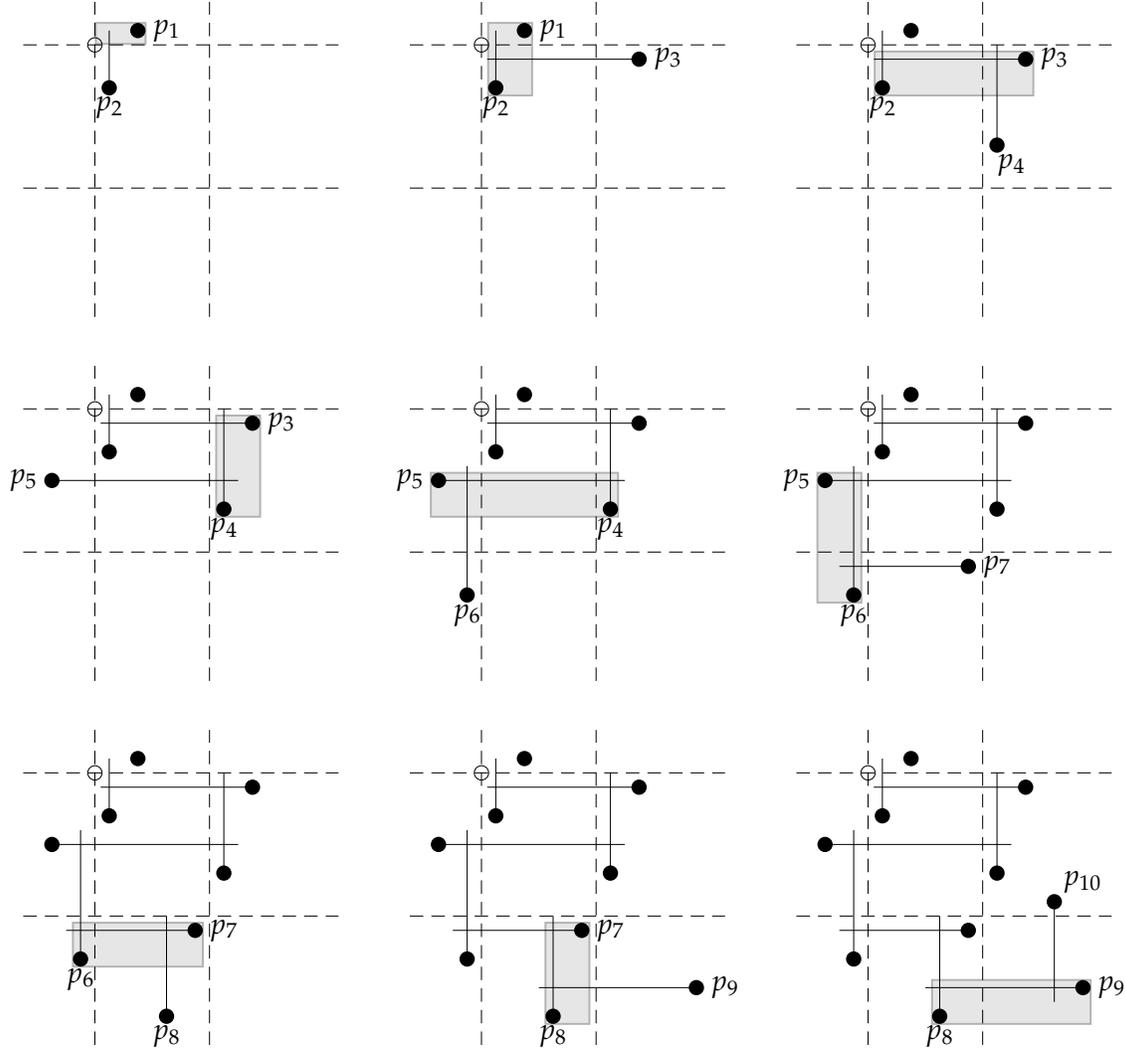
Given points $p_1,p_2,\ldots$ in the plane, denote by $\rect(p_1,p_2,\ldots)$ the smallest axes-parallel rectangle containing them. A \emph{grid pin sequence} is a sequence of points (called \emph{pins}) $p_1,p_2,\ldots$ in an $m\times n$ gridded plane which for $i\geq 2$ must satisfy four conditions:

\begin{itemize}
\item \emph{Local separation}: Each pin $p_{i+1}$ separates $p_i$ from $p_{i-1}$ by position or by value.
\item \emph{Local externality}: Each pin $p_{i+1}$ lies outside all of $\rect(p_0,p_1)$, $\rect(p_{1},p_{2})$, $\ldots$, $\rect(p_{i-1},p_i)$. The \emph{direction} of $p_{i+1}$ denotes its placement relative to $\rect(p_{i-1},p_i)$: if $p_{i+1}$ lies above (respectively, below, to the left, or to the right) $\rect(p_{i-1},p_i)$, then $p_{i+1}$ is an up (respectively, down, left, right) pin.
\item \emph{Row-column agreement}: If $p_{i+1}$ is an up or a down pin, it must lie in the same column as $p_{i}$, while if $p_{i+1}$ is a left or a right pin, it must lie in the same row.
\item \emph{Non-interaction}: Each pin $p_{i+1}$, could not have been used as a grid pin earlier in the pin sequence. I.e.\ for every $2\leq j<i$ the pin $p_{i+1}$ must violate one of local separation or row-column agreement with respect to $p_{j}$ and $p_{j-1}$ (note that it cannot violate local externality).
\end{itemize}

It still remains to explain how to initiate a grid pin sequence. We begin by placing a fictional pin $p_0$ corresponding to an \emph{origin} at the intersection of two chosen perpendicular grid lines. Our next pin, $p_1$, is then placed in one of the four cells adjacent to this origin and has two directions given by its position relative to $p_0$. For example, if $p_1$ lies below and to the left of $p_0$, then $p_1$ is both a left pin and a down pin. The second pin is then placed to satisfy the four above conditions relative to $p_0$ and $p_1$.

Grid pin sequences should be thought of as a generalisation of proper pin sequences, which were introduced in~\cite{brignall:simple-permutat:a} in a Ramsey-type argument on simple permutations. Proper pin sequences can be recovered from our definition by restricting our view to a $2\times 2$ grid: local separation and row-column agreement combine to form the separation condition of~\cite{brignall:simple-permutat:a}, and local externality and non-interaction combine to give the externality condition.

The directions of pins $p_2,p_3,\ldots$ must alternate:

\begin{lemma}\label{lem-pin-direction}In a grid pin sequence, if $p_i$ ($i>1$) is a left or a right pin, then $p_{i+1}$ is an up or a down pin. Similarly, if $p_i$ is an up or a down pin then $p_{i+1}$ must be a left or a right pin.\end{lemma}

\begin{proof}
Suppose without loss that $p_i$ is a left pin. By local externality and local separation, $p_{i+1}$ must extend from $\rect(p_{i-1},p_i)$.  However, if $p_{i+1}$ is a left or a right pin, then $p_{i+1}$ also either extends from $\rect(p_{i-2},p_{i-1})$ contradicting non-interaction, or it lies in $\rect(p_{i-2},p_{i-1})$ contradicting local externality.
\end{proof}

\begin{lemma}\label{lem-pin-placement}If $p_{i+1}$ is a left pin for the grid pin sequence $p_1,\ldots,p_i$, then $p_{i+1}$ lies further left than all previous left pins in its column, and to the right of all previous right pins in its column. Analogous statements hold if $p_{i+1}$ is a right, up or down pin.\end{lemma}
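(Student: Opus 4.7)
The plan is to prove the left pin case; the right, up, and down cases follow by entirely symmetric arguments (exchanging rows with columns and reversing directions as appropriate). Write $c$ for the column of $p_{i+1}$, and let $p_j$ (with $j\leq i$) be a previous pin lying in column $c$. The case $j=i$ is dispatched by Lemma~\ref{lem-pin-direction}, which forbids $p_i$ from being a left or right pin when $i>1$; the edge situation $i=1$ with $p_1$ a left pin in column $c$ is immediate, since row-column agreement places $p_2$ into the same cell as $p_1$ and local externality relative to $\rect(p_0,p_1)$ then forces $x_{p_2}<x_{p_1}$.

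For the main case $j<i$, I argue by contradiction via the non-interaction condition. Suppose $p_j$ is a left pin in column $c$ and, contrary to the lemma, $x_{p_{i+1}}>x_{p_j}$. Because $p_j$ is a left pin, $p_{j-1}$ lies in the same row as $p_j$ and in a column strictly to the right of~$c$, so every point in column $c$---in particular $p_{i+1}$---has $x$-coordinate strictly less than $x_{p_{j-1}}$. Combining with the hypothesis,
\[
x_{p_j}<x_{p_{i+1}}<x_{p_{j-1}},
\]
which is precisely local separation of $p_{j-1}$ and $p_j$ by the position of $p_{i+1}$. The local externality of $p_{i+1}$ at its own step places it outside $\rect(p_{j-1},p_j)$; since its $x$-coordinate lies inside the rectangle's $x$-range, its $y$-coordinate must lie outside, so relative to this rectangle $p_{i+1}$ sits unambiguously above or below (not diagonally). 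Viewed as a potential replacement for $p_{j+1}$, the pin $p_{i+1}$ therefore has direction up or down, and the row-column agreement condition (same column as $p_j$) is automatically satisfied. All three conditions required for $p_{i+1}$ to be a legitimate replacement for $p_{j+1}$ are thus met, contradicting non-interaction.

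The case where $p_j$ is a right pin in column $c$ proceeds identically, with $p_{j-1}$ now in a column strictly to the left of $c$: assuming $x_{p_{i+1}}<x_{p_j}$ yields $x_{p_{j-1}}<x_{p_{i+1}}<x_{p_j}$, position separation follows, local externality again forces the $y$-coordinate outside the rectangle, and row-column agreement holds by the shared column~$c$. The main obstacle I anticipate is correctly handling the boundary value $j=1$ (where $p_{i+1}$ would play the role of $p_2$), so that the non-interaction argument extends uniformly to all $j<i$; once this is in place, the proof above goes through without further modification, and the analogous statements for $p_{i+1}$ a right, up, or down pin follow by symmetric reasoning.
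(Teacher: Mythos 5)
There is a genuine gap, and it sits exactly where the real work of this lemma lies. Your argument hinges on the claim that, because $p_j$ is a left pin, its predecessor $p_{j-1}$ lies ``in a column strictly to the right of $c$,'' so that every point of column $c$ is to the left of $p_{j-1}$ and the chain $x_{p_j}<x_{p_{i+1}}<x_{p_{j-1}}$ follows automatically. That claim is false: row-column agreement only forces $p_{j-1}$ and $p_j$ into the same \emph{row}, and nothing in the definition prevents consecutive pins from lying in the same cell, hence in the same column $c$ (indeed the antichain construction of Section~\ref{sec-grid-antichains} deliberately places two consecutive pins in one cell at each ``turnaround''). When $p_{j-1}$ lies in column $c$ it may sit between $p_j$ and $p_{i+1}$ by position, in which case $p_{i+1}$ does \emph{not} separate $p_{j-1}$ from $p_j$, local separation fails, and non-interaction yields no contradiction. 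The same unsupported assumption reappears in your right-pin case. (Your treatment of $j=i$ via Lemma~\ref{lem-pin-direction} and of the small boundary cases is fine; the easy half of the contradiction argument, where $p_{j-1}$ really is to the right of $p_{i+1}$, coincides with the paper's.)

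The paper's proof is organised precisely around this troublesome case: it argues by induction on the number of left and right pins in the column, chooses $p_j$ to be the left pin nearest to $p_{i+1}$ (no left pins between them), and, when $p_{j-1}$ falls between $p_j$ and $p_{i+1}$, examines $p_{j-2}$ --- it cannot be a left pin by the minimal choice of $p_j$, cannot be a right pin by the inductive hypothesis (this is where the ``to the right of all previous right pins'' half of the statement is genuinely needed, which is why the two halves are proved simultaneously), and cannot be $p_0$ by a separate argument about $\rect(p_0,p_1)$. Your proposal contains none of this machinery and cannot be patched without essentially reproducing it, so as it stands the proof does not go through.
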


\begin{proof}
We prove both statements of the first sentence simultaneously by induction on the total number of left and right pins in a given column. The base case, where there is just a single left or right pin, is trivial. So now suppose for a contradiction to the first statement that $p_{i+1}$ is a left pin which lies to the right of some earlier left pin $p_{j}$ ($j<i$) in the same column, and assume without loss that there are no other left pins between $p_{i+1}$ and $p_{j}$. If $p_{j-1}$ (the predecessor of $p_j$) lies to the right of $p_{i+1}$, then $p_{i+1}$ separates $p_{j-1}$ from $p_j$, is not contained in any of $\rect(p_{j-1},p_{j})$,$\ldots$,$\rect(p_{0},p_{1})$ and shares a column with $p_j$, and hence is a pin for $p_1,\ldots,p_j$, contradicting non-interaction. Thus $p_{j-1}$ lies between $p_j$ and $p_{i+1}$ and so lies in the same column. Now consider the pin $p_{j-2}$, which was a left or a right pin, or $p_0$. (Note that $p_1$ is always either a left or a right pin.) It cannot be a left pin as it would necessarily have to lie by position between $p_j$ and $p_{i+1}$ but by our assumption there are no left pins between $p_j$ and $p_{i+1}$; it cannot be a right pin since by row-column agreement it must lie in the same column as $p_{j-1}$ but to the right of $p_j$, contradicting the inductive hypothesis; finally, it cannot be $p_0$ as then, in order to lie on an adjacent grid line and ensure that $p_j=p_2$ extends from $\rect(p_0,p_1)$, it must lie to the right of $p_{i+1}$, but then $p_{i+1}$ either lies in $\rect(p_0,p_1)$ contradicting local externality or it satisfies the conditions to be a pin for $p_0,p_1$ contradicting non-interaction.

A similar argument may be applied to show that $p_{i+1}$ lies to the right of all previous right pins in its column, and so by induction the first sentence of the lemma is true. Finally, symmetry proves the analogous statements in the other three directions.
\end{proof}

Unlike the $2\times 2$ case, for an arbitrary $m\times n$ grid the direction of the pin is not sufficient to describe the placement of the pin, so we need to be more specific. A \emph{horizontal pin} is either a left or a right pin, while a \emph{vertical pin} is either an up or a down pin.

\begin{lemma}\label{lem-pin-row-column}Let $p_1,p_2,\ldots, p_i$ be a grid pin sequence of length $i\geq 2$ in an $m\times n$ grid. Then if $p_{i+1}$ is a horizontal pin, its placement relative to $p_1,\ldots,p_i$ is uniquely determined (up to order isomorphism) by the column in which it lies. Similarly, if $p_{i+1}$ is a vertical pin, its placement relative to $p_1,\ldots,p_i$ is uniquely determined by the row in which it lies.\end{lemma}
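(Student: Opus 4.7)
The plan is to combine row-column agreement, Lemma~\ref{lem-pin-placement}, and non-interaction, proceeding by strong induction on $i$.

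First, row-column agreement pins the cell row of a horizontal $p_{i+1}$ to that of $p_i$; call it $r$, so $p_{i+1}$ sits in the cell $(c,r)$. If the given cell column $c$ differs from the cell column $c_i$ of $p_i$, the direction of $p_{i+1}$ is immediate: a left pin when $c<c_i$ and a right pin when $c>c_i$, since the separating grid line forces this. With the direction in hand, Lemma~\ref{lem-pin-placement} determines the horizontal rank of $p_{i+1}$ within column $c$ — strictly to the left of every previous left pin in $c$ and to the right of every previous right pin in $c$ — while the ordering against pins in other cell columns is trivial from the grid structure. The inductive hypothesis says that each earlier pin's placement was itself forced by its own cell column or cell row, so the horizontal and vertical ranks of $p_1,\ldots,p_i$ are already fixed and there is a unique slot for $p_{i+1}$; the vertical rank of $p_{i+1}$ within row $r$ is then determined in exactly the same way, using the row-analog of Lemma~\ref{lem-pin-placement} (the ``right/up/down'' clauses) to place $p_{i+1}$ against the other pins in row $r$.

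The delicate case is $c=c_i$. Here Lemma~\ref{lem-pin-direction} forces $p_i$ to be vertical, and therefore (by row-column agreement for $p_i$) $p_{i-1}$ also lies in column $c_i$, so geometrically $p_{i+1}$ could sit either to the left or to the right of $\rect(p_{i-1},p_i)$ within column $c_i$. The plan is to use non-interaction to eliminate one of these two possibilities. Starting from the ``wrong'' candidate $q$ and walking backwards along the alternating horizontal/vertical sequence supplied by Lemma~\ref{lem-pin-direction}, I expect to locate an earlier pair $(p_{j-1},p_j)$ against which $q$ satisfies local separation, row-column agreement and local externality; this would have licensed $q$ as the pin $p_{j+1}$, contradicting non-interaction. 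Once one side is excluded, Lemma~\ref{lem-pin-placement} fixes the horizontal rank of the surviving candidate, and the row rank follows as in the easy case.

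The vertical-pin half of the lemma is proved by exchanging the roles of rows and columns throughout. The main obstacle I anticipate is the non-interaction traceback in the $c=c_i$ case: making the alternation of directions supplied by Lemma~\ref{lem-pin-direction} interact cleanly with the three conditions that must simultaneously be checked against an earlier pair is the technical heart of the argument, and the rest reduces to routine bookkeeping of ranks.
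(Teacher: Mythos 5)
There is a genuine gap, and it sits exactly where the real content of the lemma lies: the value-side placement. You dispose of the vertical rank of a horizontal pin $p_{i+1}$ by citing ``the row-analog of Lemma~\ref{lem-pin-placement} (the `right/up/down' clauses)'', but those clauses constrain a new pin only when that pin is itself an up or down pin; they say nothing about where a \emph{horizontal} pin sits by value among the earlier pins of its row. Yet this is precisely what must be shown: two candidate placements of $p_{i+1}$ in the prescribed column could a priori straddle, by value, some earlier pin of the row and then fail to be order isomorphic relative to $p_1,\ldots,p_i$. The paper closes this by intersecting two strips: a vertical strip in the column (to the left of all earlier left pins and right of all earlier right pins, which also excludes the column's vertical pins, since each of those lies horizontally inward of its predecessor), and a horizontal strip cut out by local separation (between $p_{i-1}$ and $p_i$ by value) \emph{together with non-interaction} (e.g.\ above every up pin other than $p_i$ in the row), and then observes that the resulting rectangle contains no earlier pin and is separated by none, so all placements in it are order isomorphic. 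Your proposal never invokes non-interaction for the value coordinate at all, so the rank bookkeeping you describe does not establish the order-isomorphism claim.

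Two further points. The ``delicate'' case $c=c_i$ is only a plan: you say you expect a traceback along the alternating directions to reach an earlier pair $(p_{j-1},p_j)$ contradicting non-interaction, but you do not exhibit it, and such tracebacks are exactly the technical work already done inside the proof of Lemma~\ref{lem-pin-placement}; no new one is needed. If $p_{i+1}$ lies in the column of $p_i$ (hence of $p_{i-1}$), local externality forces a candidate whose direction is opposite to that of $p_{i-1}$ to lie on the far side of $p_{i-1}$, which contradicts Lemma~\ref{lem-pin-placement} directly; so the direction of $p_{i+1}$ must agree with that of $p_{i-1}$, which is how the paper argues. Finally, the strong induction on $i$ is superfluous: the sequence $p_1,\ldots,p_i$ is given in the hypothesis, and the lemma is purely a statement about its possible continuations, so nothing about earlier pins being ``forced'' is needed or even meaningful here.
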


\begin{figure}
\begin{center}
\psset{xunit=0.012in, yunit=0.012in}
\psset{linewidth=0.005in}
\begin{pspicture}(0,0)(190,170)
{\psset{fillstyle=solid,cornersize=absolute,linearc=1pt,linewidth=0.005in,linestyle=solid,linecolor=black}
  \psframe[fillcolor=lightgray](60,80)(100,100)}
\pscircle*(0,70){0.03in}
\pscircle*(20,80){0.03in}
\pscircle*(50,160){0.03in}
\pscircle*(60,140){0.03in}
\pscircle*(100,40){0.03in}
\pscircle*(110,10){0.03in}
\pscircle*(140,50){0.03in}
\pscircle*(150,100){0.03in}
{\tiny
\rput[c](155,95){$p_{i}$}
\rput[c](140,45){$p_{i-1}$}}
\psline[linestyle=dashed](40,0)(40,160)
\psline[linestyle=dashed](120,0)(120,160)
\psline[linestyle=dashed](0,60)(180,60)
\psline[linestyle=dashed](0,120)(180,120)
\psline[linestyle=dotted,dotsep=2pt](60,0)(60,160)
\psline[linestyle=dotted,dotsep=2pt](100,0)(100,160)
\psline[linestyle=dotted,dotsep=2pt](0,80)(180,80)
\psline[linestyle=dotted,dotsep=2pt](0,100)(180,100)
\psline(20,50)(20,80)
\psline(30,70)(0,70)
\psline(10,80)(10,50)
\psline(30,140)(60,140)
\psline(50,130)(50,160)
\psline(60,150)(30,150)
\psline(100,40)(130,40)
\psline(110,10)(110,50)
\psline(100,20)(130,20)
\psline(170,50)(140,50)
\psline(150,40)(150,100)
\psline[linestyle=dotted,dotsep=1pt](10,50)(10,40)
\psline[linestyle=dotted,dotsep=1pt](20,50)(20,40)
\psline[linestyle=dotted,dotsep=1pt](30,140)(20,140)
\psline[linestyle=dotted,dotsep=1pt](30,150)(20,150)
\psline[linestyle=dotted,dotsep=1pt](130,40)(140,40)
\psline[linestyle=dotted,dotsep=1pt](130,20)(140,20)
\psline[linestyle=dotted,dotsep=1pt](170,50)(180,50)
\end{pspicture}
\end{center}
\caption{The shaded region denotes the area where pin $p_{i+1}$ can be placed.}
\label{fig-pin-placement}
\end{figure}
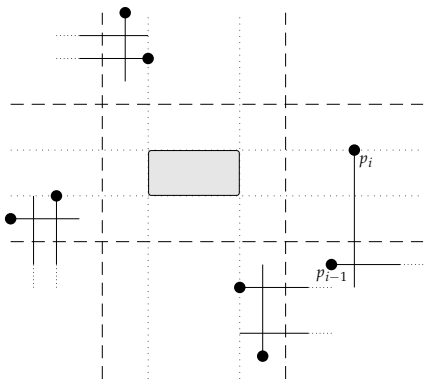

\begin{proof}
We prove only the case where $p_{i+1}$ is a horizontal pin and $p_i$ is an up pin. By row-column agreement, $p_{i+1}$ must be made to lie in the same row as $p_{i}$, so coupling this information with the knowledge that $p_{i+1}$ must lie in a specified column is enough to determine the cell into which $p_i$ is placed. In particular, if the column that is to contain $p_{i+1}$ is to the left (respectively, right) of the column containing $p_i$, then $p_{i+1}$ is a left (resp. right) pin. If $p_{i+1}$ is to lie in the same column as $p_i$, then the direction of $p_{i+1}$ must match the direction of $p_{i-1}$ to satisfy Lemma~\ref{lem-pin-row-column}. (Note that if $p_{i-1}=p_1$, then the direction of $p_{i+1}$ matches the horizontal direction of $p_1$.)

By Lemma~\ref{lem-pin-placement}, $p_i$ must be placed in the region of the cell to the left of all earlier left pins in its column, and to the right of all right pins in the column. This defines a vertical strip extending the length of the column that is devoid of pins. Similarly, $p_{i+1}$ must lie below $p_i$ and above $p_{i-1}$ to satisfy separation, and additionally it must lie above all up pins other than $p_i$ in its row to satisfy non-interaction. This defines a horizontal strip extending to the ends of the row which is devoid of pins.

The intersection of the horizontal strip and the vertical strip defines a rectangular region in the correct cell in which $p_{i+1}$ can be placed --- see Figure~\ref{fig-pin-placement}. By its construction, there are no points among $p_1,\ldots,p_i$ separating this region, and so all placements of $p_{i+1}$ within this region produce the same permutation up to order isomorphism.
\end{proof}

Note that the above lemma can be extended to include pin $p_2$, but this requires a little further thought. It is not sufficient to state which cell it is to be placed in as there are two different placements of $p_2$ if it is to lie in the same cell as $p_1$: one horizontal, one vertical. However, if the placement of $p_2$ is specified by a row, then we know $p_2$ is to be a vertical pin lying in the same column as $p_1$, and if specified by a column then $p_2$ is a horizontal pin lying in the same row as $p_1$.

Before we embark on constructing our antichain, we recall the definition of an inflation from Section~\ref{sec-definitions} and extend this to grid pin sequences. (Note that this extension of the definition of an inflation is unrelated to the one used in Section~\ref{sec-pwo-grid-classes}.) Letting $p_1,\ldots,p_n$ be a grid pin sequence, the \emph{grid pin sequence inflation of $p_1,\ldots,p_n$ by the permutations $\alpha_1,\ldots,\alpha_n$} is the permutation formed by taking the permutation corresponding to the grid pin sequence $p_1,\ldots,p_n$, and inflating each point $p_i$ ($i=1,\ldots, n$) with the permutation $\alpha_i$. This is denoted $p_1[\alpha_1],p_2[\alpha_2],\ldots,p_i[\alpha_i]$, but whenever $\alpha_i=1$ we denote the trivially inflated pin $p_i[1]$ simply by $p_i$. We call such a permutation an \emph{inflated grid pin permutation}.

For each $k$, we now use inflated grid pin sequences to construct an infinite set of permutations $A^k$ lying in $\grid(\M^k)$. This construction is accompanied by Figure~\ref{fig-grid-antichain}. We begin by showing how to construct the infinite uninflated grid pin sequence $p_1,p_2,\ldots$ that will be used to construct all the permutations of $A^k$: First place the imaginary pin $p_0$ in the top-right corner of the cell labelled $\C$ lying in the middle of $\M^k$, and the pin $p_1$ as a left and down pin (also in the cell labelled by $\C$). This cell is the only one in its column, but there is one other non-empty cell in the same row, into which we place a right pin $p_2$. We then recursively place each pin $p_{i+1}$ so that it does not lie in the same cell as $p_i$, but shares a row or column with $p_i$. (Note that by Lemma~\ref{lem-pin-row-column}, this is a sufficient description, as we know whether $p_i$ was a horizontal or a vertical pin.) Once we have placed our first pin $p_j$ in the cell labelled by $\D^+$ or $\D^-$, we place the next pin $p_{j+1}$ in the same cell, and then $p_{j+2}$ is placed in the cell that contained $p_{j-1}$. Again we place one pin per cell back around until we reach the cell labelled by $\C$. Once in the cell with label $\C$, we place a second point in this cell to ``turn around'', and repeat.

Finally, $A^k=\{\alpha_1,\alpha_2,\ldots\}$, where $\alpha_i=p_1[21],p_2, \ldots, p_{(2i-1)k-1}, p_{(2i-1)k}[\beta]$ is an inflated grid pin permutation of length $(2i-1)k+2$, with $\beta=21$ if $k$ is even and $\beta=12$ otherwise.

\begin{figure}
\begin{center}
\psset{xunit=0.009in, yunit=0.009in}
\psset{linewidth=0.005in}
\begin{pspicture}(0,0)(300,300)
\psaxes[dy=10,dx=10,tickstyle=bottom,showorigin=false,labels=none](0,0)(300,300)
\pscircle*(10,280){0.03in}\pscircle*(20,20){0.03in}
\pscircle*(30,10){0.03in}\pscircle*(40,40){0.03in}
\pscircle*(50,270){0.03in}\pscircle*(60,240){0.03in}
\pscircle*(70,30){0.03in}\pscircle*(80,220){0.03in}
\pscircle*(90,50){0.03in}\pscircle*(100,80){0.03in}
\pscircle*(110,210){0.03in}\pscircle*(120,180){0.03in}
\pscircle*(130,90){0.03in}\pscircle*(140,140){0.03in}
\pscircle*(150,110){0.03in}\pscircle*(160,160){0.03in}
\pscircle*(170,150){0.03in}\pscircle*(180,190){0.03in}
\pscircle*(190,170){0.03in}\pscircle*(200,130){0.03in}
\pscircle*(210,200){0.03in}\pscircle*(220,230){0.03in}
\pscircle*(230,120){0.03in}\pscircle*(240,250){0.03in}
\pscircle*(250,100){0.03in}\pscircle*(260,70){0.03in}
\pscircle*(270,260){0.03in}\pscircle*(280,290){0.03in}
\pscircle*(290,60){0.03in}
{\tiny
\rput[c](10,270){$p_{26}$}\rput[c](10,30){$p_{27}$}
\rput[c](40,30){$p_{10}$}\rput[c](40,270){$p_{11}$}
\rput[c](60,250){$p_{8}$}\rput[c](70,20){$p_{9}$}
\rput[c](81,210){$p_{22}$}\rput[c](101,50){$p_{23}$}
\rput[c](100,70){$p_{14}$}\rput[c](100,210){$p_{15}$}
\rput[c](120,190){$p_{4}$}\rput[c](120,90){$p_{5}$}
\rput[c](141,150){$p_{18}$}\rput[c](161,110){$p_{19}$}
\rput[c](149,160){$p_{1}$}\rput[c](190,190){$p_{3}$}
\rput[c](190,160){$p_{2}$}\rput[c](210,130){$p_{17}$}
\rput[c](210,210){$p_{16}$}\rput[c](210,230){$p_{21}$}
\rput[c](229,130){$p_{20}$}\rput[c](250,250){$p_{7}$}
\rput[c](250,90){$p_{6}$}\rput[c](270,70){$p_{13}$}
\rput[c](270,270){$p_{12}$}\rput[c](270,290){$p_{25}$}
\rput[c](290,70){$p_{24}$}}
\psline[linestyle=dashed](0,45)(300,45)
\psline[linestyle=dashed](0,105)(300,105)
\psline[linestyle=dashed](0,175)(300,175)
\psline[linestyle=dashed](0,235)(300,235)
\psline[linestyle=dashed](75,0)(75,300)
\psline[linestyle=dashed](135,0)(135,300)
\psline[linestyle=dashed](175,0)(175,300)
\psline[linestyle=dashed](235,0)(235,300)
\psline(150,170)(190,170)\psline(180,160)(180,190)
\psline(190,180)(120,180)\psline(130,190)(130,90)
\psline(120,100)(250,100)\psline(240,90)(240,250)
\psline(250,240)(60,240)\psline(70,250)(70,30)
\psline(80,40)(40,40)\psline(50,30)(50,270)
\psline(40,260)(270,260)\psline(260,270)(260,70)
\psline(270,80)(100,80)\psline(110,70)(110,210)
\psline(100,200)(210,200)\psline(200,210)(200,130)
\psline(210,140)(140,140)\psline(150,150)(150,110)
\psline(140,120)(230,120)\psline(220,110)(220,230)
\psline(230,220)(80,220)\psline(90,230)(90,50)
\psline(80,60)(290,60)\psline(280,50)(280,290)
\psline(290,280)(10,280)\psline(20,290)(20,26)
\psccurve(15,25)(20,10)(35,5)(30,20)
\psccurve(155,165)(160,150)(175,145)(170,160)
\end{pspicture}
\end{center}
\caption{An element of $A^8$ in the grid class of $\M^8$.}
\label{fig-grid-antichain}
\end{figure}
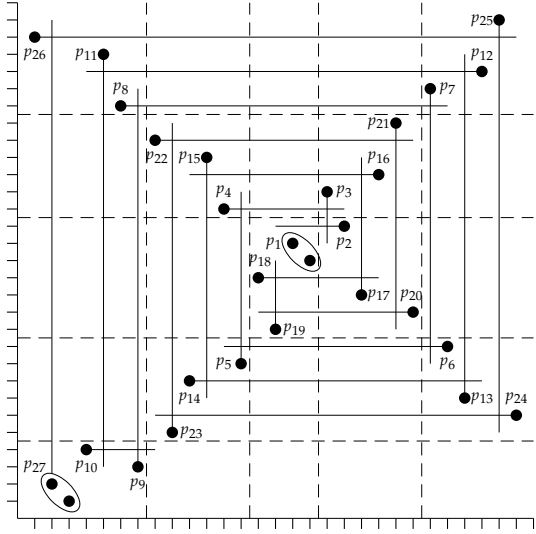

\begin{lemma}Every permutation in $A^k$ is $\M^k$-griddable.\end{lemma}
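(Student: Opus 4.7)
The plan is to show that the cell assignments implicit in the grid pin sequence construction define an $\M^k$-gridding of $\alpha_i$. Since the construction explicitly places every pin $p_j$ into a specified cell of $\M^k$, and the inflations $p_1[21]$ and $p_{(2i-1)k}[\beta]$ each lie entirely within their parent pin's cell, we may take the horizontal and vertical gridlines separating these cells as our $\M^k$-gridding. It remains only to verify, cell by cell, that the points lying inside each cell form a permutation belonging to the class labelling the cell.

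For the intermediate cells (those labelled by the monotone classes $\av{21}$ or $\av{12}$), exactly one pin is deposited per half-traversal, and all consecutive visits to the same cell share the same incoming direction, determined by the unique neighbour in the path $G_{\M^k}$. By Lemma~\ref{lem-pin-placement}, these successive pins are therefore placed progressively further out in that direction, producing a monotone sequence whose orientation will then be seen to agree with the class label via a short case check on the row/column structure of the edge along which the cell is entered. For the endpoint cells, the middle $\C$ cell accumulates $p_1[21]$ together with one turnaround pin per return of the sequence, and repeated application of Lemma~\ref{lem-pin-placement} places each new turnaround pin strictly outside the bounding rectangle of all previous contributions in this cell, yielding a direct sum of a $21$ block with copies of $1$, which lies in $\C = \oplus 21$. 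The opposite endpoint cell similarly collects pairs of consecutive pins per full traversal together with $p_{(2i-1)k}[\beta]$; the parity choice of $\beta$ ensures that the resulting pattern lies in $\D^+ = \oplus 21$ or $\D^- = \ominus 12$, as appropriate.

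The main obstacle is the bookkeeping for the intermediate cells: one must verify that the direction ``progressively further out'' given by Lemma~\ref{lem-pin-placement} does indeed correspond to the required monotone pattern at that particular cell. This reduces to a straightforward case analysis tracking the alternation of row and column edges along $G_{\M^k}$, which is controlled directly by the recursive construction of $\M^k$.
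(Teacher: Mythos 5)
Your proposal is correct and takes essentially the same route as the paper, which likewise settles the lemma by appealing to the permitted placement regions of successive pins (Lemma~\ref{lem-pin-row-column}, resting on Lemma~\ref{lem-pin-placement}) to see that the monotone cells receive monotone sequences of the correct orientation while the two end cells receive permutations of $\oplus 21$ or $\ominus 12$. One minor bookkeeping slip: on each return the cell labelled $\C$ actually receives a pair of pins (the arriving pin together with the turnaround pin), so its contents form a direct sum of $21$-blocks rather than a single $21$-block with copies of $1$ --- but this still lies in $\oplus 21$, so your conclusion is unaffected.
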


\begin{proof}
This is clear by considering the permitted region in which to place successive pins described in Lemma~\ref{lem-pin-row-column}. In particular, cells labelled by monotone classes contain only monotone sequences of the right type, and the non-monotone cells contain permutations from $\oplus 21$ or $\ominus 12$ as required.
\end{proof}

We need an infinite subset of $A^k$ that is both an antichain and strongly uniquely $\M^k$-griddable. We will in fact find the latter first: knowing the uniqueness of the $\M^k$-gridding will assist us in proving that elements of $A^k$ are incomparable. The methods of our proofs are similar in flavour to those used by Murphy and Vatter~\cite{murphy:profile-classes:}. We begin by making the following straightforward observation, which we will use repeatedly.

\begin{lemma}\label{lem-pin-pairs}
Let $\alpha\in A^k$ be of length $n+2$, $\alpha'$ be the $\M^k$-gridded permutation of length $n$ corresponding to the uninflated grid pin sequence $p_1,\ldots,p_n$ of $\alpha$, and $f$ be any grid mapping of $\M^k$. Then, if in $f(\alpha')$ the pins $p_i$ and $p_j$ ($1<i<j<n$) lie in the same cell and are adjacent by position, they are separated precisely by the pins $p_{i-1}$ and $p_{j+1}$ by value. The same holds swapping ``position'' and ``value''.
\end{lemma}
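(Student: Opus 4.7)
The plan is to exploit the four defining properties of grid pin sequences — local separation, local externality, row-column agreement, and non-interaction — together with Lemma~\ref{lem-pin-placement}. I first note that any grid mapping $f$ preserves this structure (row complements flip up and down in a row, column reverses flip left and right in a column, the grid inverse swaps horizontal with vertical, and row/column permutations simply relabel cells), so the argument I give below applies unchanged in $f(\alpha')$.

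Suppose $p_i$ and $p_j$ lie in the same cell $c$ and are adjacent by position in $f(\alpha')$. I first show that each of $p_{i-1}$ and $p_{j+1}$ lies in the grid row of $c$ (rather than in its column). By row-column agreement, each shares either the row or the column of its neighbour in the sequence; if instead $p_{j+1}$ shared the column of $c$, then the placement rule of Lemma~\ref{lem-pin-placement} would place $p_{j+1}$'s horizontal coordinate between those of the previously-placed same-type pins in that column — in particular, strictly between those of $p_i$ and $p_j$ — contradicting adjacency by position. The same argument applies to $p_{i-1}$. Thus both lie in the grid row of $c$ and have values inside its y-range.

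Because $p_{j+1}$ lies in a cell different from $c$ but in the same row, it does not share the column of $c$; hence its position cannot lie strictly between those of $p_j$ and $p_{j-1}$ (which share the column of $c$), and so the local separation of $p_j$ from $p_{j-1}$ by $p_{j+1}$ must be by value. This places $p_{j+1}$'s value strictly between $p_j$'s and $p_{j-1}$'s. A check using the alternating row/column edge structure of $G_{\M^k}$ shows $p_{j-1}$ lies on the same side of $p_j$'s value as $p_i$ does, so $p_{j+1}$'s value in fact falls in the open interval determined by $p_i$ and $p_j$. The parallel argument applied to $p_i$ separating $p_{i-1}$ from $p_{i-2}$ places $p_{i-1}$'s value in the same interval.

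Finally, to rule out any other pin $p_m$ with value in this open interval, I apply non-interaction. Such a $p_m$ would have y-coordinate in the grid row of $c$, and the rigid monotone accumulation of same-type pins in each column (Lemma~\ref{lem-pin-placement}) together with the adjacency-by-position of $p_i$ and $p_j$ then lets one identify an earlier consecutive pair $(p_\ell, p_{\ell+1})$ for which $p_m$'s position and value simultaneously satisfy local separation, local externality and row-column agreement — contradicting the non-interaction condition imposed when $p_m$ was itself placed. The symmetric statement (adjacency by value) then follows by applying the grid-inverse mapping and re-running the above argument. The main obstacle will be organising this last non-interaction check: it branches on the direction of $p_m$ (horizontal or vertical) and on which lap of the pin sequence contains $p_m$, but in each case the alternating edge-type structure of $G_{\M^k}$ and Lemma~\ref{lem-pin-placement} readily supply the required earlier pair.
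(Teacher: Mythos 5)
Your opening reduction (grid mappings only reverse within-row/within-column orders, the grid inverse swaps position with value) is fine and is exactly the paper's first step; the problems are all in the identity case, where the paper contents itself with an appeal to the placement analysis of Lemma~\ref{lem-pin-row-column} and you try to supply the details. First, Lemma~\ref{lem-pin-placement} constrains a left/right pin's \emph{position} against earlier left/right pins in its column (equivalently an up/down pin's \emph{value} against earlier up/down pins in its row); it says nothing about the horizontal coordinate of a vertical pin, so your step-1 contradiction with adjacency is not established by it. What fixes that coordinate is local separation (strictly between $p_{j-1}$ and $p_j$) together with the non-interaction analysis inside the proof of Lemma~\ref{lem-pin-row-column}, and whether the resulting strip lands between $p_i$ and $p_j$ is essentially what is to be proved. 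Second, you assume $p_{i-1}$ and $p_{j+1}$ lie in cells different from $c$; for the end cells $\C$ and $\D^{\pm}$ the relevant adjacent pairs have their separating pins inside $c$ itself (turnaround partners: already for $\M^2$ the fourth and ninth pins, adjacent by position in the $\D^+$ cell, are separated by value by the third and tenth pins, which lie in that same cell), and these configurations are never treated. Third, ``same-sidedness of $p_{j-1}$'' does not place the value of $p_{j+1}$ strictly between those of $p_i$ and $p_j$: separation only places it between $p_{j-1}$ and $p_j$, and the value of $p_{j-1}$ typically lies far outside the interval in question (for the pair $p_2,p_5$ in the $\av{12}$ cell of $\M^2$, $p_4$ sits in the cell above); squeezing $p_6$ past the value of $p_2$ again needs the non-interaction placement argument, which in turn uses the inductively maintained interleaving of the earlier pins in that row --- an induction your sketch never sets up.

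The largest gap is the word ``precisely'', which is the whole point of the lemma as it is used later: you only sketch the exclusion of other pins, and the mechanism you propose --- non-interaction of an offending pin $p_m$ against an earlier consecutive pair --- can only exclude pins placed \emph{after} that pair. A pin placed before $p_i$ cannot violate non-interaction with respect to a pair that did not yet exist; it has to be excluded by showing that $p_i$ and $p_j$ were themselves placed beyond it, via Lemma~\ref{lem-pin-placement} and the nesting of successive laps, which is a different argument. Moreover some restriction of this kind is genuinely needed: two pins of an end cell coming from non-consecutive laps can be adjacent by position while many pins separate them by value (for $\M^2$, the inner pin of the second turnaround pair in the $\C$ cell and the outer pin of the third are such a pair), so a complete proof must exploit exactly which adjacent pairs the spiral produces and how the laps nest --- something neither your sketch nor, admittedly, the paper's one-line treatment spells out. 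In short: the strategy (symmetry reduction, then placement analysis) is the paper's, but the detailed steps as written miscite Lemma~\ref{lem-pin-placement}, omit the end-cell cases, and defer the essential exclusivity argument.
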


\begin{proof}
First, by composition of functions it is sufficient to prove the statement when $f$ is the grid inverse map, a row or column permutation, a row complement, a column reversal or the identity map. The effect of the grid inverse mapping $\phi$ of $\M^k$ on $\alpha'$ is merely to swap the terms ``position'' and ``value'' in the statement of the lemma, so we can discount this case. Moreover, every other grid mapping that we need to consider preserves the relative orderings of points by position and value in any given row or column, except possibly to reverse the order. Thus the lemma is true if we can show it is true when $f$ is the identity grid mapping, and this is easily seen by considering the placement of successive pins as described in Lemma~\ref{lem-pin-row-column}.
\end{proof}

This lemma is all that is required to prove what we need.

\begin{lemma}\label{lem-antichain-uniquely-griddable}Every permutation of length at least $2(k+1)^2+3$ in $A^k$ is strongly uniquely $\M^k$-griddable.\end{lemma}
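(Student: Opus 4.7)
The plan is to prove unique $\M^k$-griddability first, and then observe that the argument survives every grid mapping of $\M^k$ to obtain strong unique griddability. For the uniqueness statement I would fix the ``canonical'' $\M^k$-gridding of $\alpha$ that arises from the construction described just before Figure~\ref{fig-grid-antichain}, and argue that any other $\M^k$-gridding would have to shift one of the horizontal or vertical grid lines past at least one pin; I would then show every such shift is incompatible with either Lemma~\ref{lem-pin-pairs} or the class label of the affected cell.

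The heart of the argument is a \emph{clustering of pins by cell} read directly off the ungridded permutation. Once the pin sequence is long enough, each non-empty cell of $\M^k$ receives many pins, since the sequence loops through all $k+1$ cells of $G_{\M^k}$ and ``turns around'' at each non-monotone end. By Lemma~\ref{lem-pin-pairs}, two pins landing in a common cell that are adjacent in position are separated in value precisely by the two neighbouring pins in the sequence (which, by row-column agreement and the construction, sit in the adjacent cell along the corresponding edge of $G_{\M^k}$). This lets me identify, using only the underlying permutation and the class labels of $\M^k$, which pins must occupy a common cell: those pins form long monotone subsequences whose direction matches the cell's label (by Lemmas~\ref{lem-pin-direction} and \ref{lem-pin-placement}), interleaved with the pins of the adjacent cell in a forced pattern. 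Moreover, the inflated endpoints $p_1[21]$ and $p_n[\beta]$ introduce local $21$- or $12$-patterns that no monotone cell can accept, so they must reside in the non-monotone cells labelled $\C$ and $\D^{\pm}$. With these clusters and endpoint cells identified, every horizontal and vertical grid line is forced to lie in exactly the gap prescribed by the canonical gridding.

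For strong unique griddability, the crucial point is that every ingredient of the clustering argument survives under an arbitrary grid mapping $f$ of $\M^k$: Lemma~\ref{lem-pin-pairs} is itself stated for any such $f$, row and column permutations merely rearrange cells, and row complements and column reverses transform monotone cells to monotone cells of the opposite direction while preserving the non-monotone character of $\C$, $\D^+$ and $\D^-$ (along with the orientations of their $\oplus$/$\ominus$ patterns). Hence applying the same forcing argument to $f(\alpha)$ inside $f(\M^k)$ yields uniqueness of the $f(\M^k)$-gridding, which is exactly the strong unique griddability we need. The main obstacle I anticipate is the case analysis around the ``turnaround'' pairs of pins inside the non-monotone cells --- particularly distinguishing the inflated endpoints from the extra pin placed at each reversal in the cell labelled $\C$ --- and this is exactly what the length bound $2(k+1)^2+3$ is chosen to handle: with $k+1$ cells and a bounded number of pins per turnaround, $2(k+1)^2$ pins suffice to guarantee that every cell has enough interior pins for its boundaries to be unambiguously reconstructed, so no alternative placement of any grid line can be made to satisfy all cell labels simultaneously.
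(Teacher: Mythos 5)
There is a genuine gap: your argument never supplies a mechanism that actually rules out an alternative gridding, and the one local obstruction you do cite is false. You claim that any other $\M^k$-gridding must ``shift a grid line past a pin'' and that every such shift is incompatible with Lemma~\ref{lem-pin-pairs} or with the class label of the affected cell, and likewise that the inflated endpoints $p_1[21]$ and $p_n[\beta]$ ``introduce local $21$- or $12$-patterns that no monotone cell can accept.'' Neither claim holds locally: a $21$ sits comfortably in any cell labelled $\av{12}$, and the non-monotone cell $\C=\oplus 21$ contains every increasing permutation, so a pin (or a short monotone run, or an inflated endpoint) can perfectly well be moved across a grid line into a neighbouring cell without violating any label. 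The uniqueness of the gridding is a \emph{global} phenomenon, and your ``clustering of pins by cell read directly off the ungridded permutation'' is exactly the statement that needs proof: you must exclude griddings that split a monotone run between the two non-empty cells of a row or column, or absorb part of it into $\C$ or $\D^{\pm}$, and nothing in the proposal does this.

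The paper's proof supplies the missing machinery, and its shape is quite different from yours. The bound $2(k+1)^2+3$ is not about ``enough interior pins per cell''; it is a pigeonhole count: since both griddings use at most $k+1$ non-empty cells, some three pins $p_h,p_i,p_j$ lie together in one cell of the hypothetical second gridding \emph{and} in one cell of the first, and (after shrinking $\rect(p_h,p_i,p_j)$) two of them are adjacent by position or by value. From such an adjacent pair one propagates: by Lemma~\ref{lem-pin-pairs} the separating pins $p_{i-1},p_{j+1}$ are forced into the unique other non-empty cell of that column (or row), and iterating walks the forcing along the path $G_{\M^k}$. At the non-monotone end cells a separate argument is needed --- the four relevant pins form a $2143$ or $3412$, which cannot be distributed among monotone cells because each row has at most two non-empty cells --- after which two new adjacent pairs are launched back along the path, and the process continues until $p_1$ and $p_n$ are swept up, forcing every pin into the same cell in both griddings. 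Your reduction of the ``strong'' part to the behaviour of grid mappings (inverse handled separately, the rest preserving the relevant structure) matches the paper and is fine, but without the pigeonhole-plus-propagation core the uniqueness claim itself is not established.
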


\begin{proof}
Given $\alpha\in A^k$ of length $n+2\geq 2(k+1)^2+3$, consider the permutation $\alpha'$ corresponding to the uninflated grid pin sequence of length $n\geq 2(k+1)^2+1$ used to create $\alpha$. We will prove the result for $\alpha'$, from which the required result easily follows. Label the points of $f(\alpha')$ with $p_1,p_2,\ldots, p_n$ according to the grid pin sequence used to construct $\alpha'$, i.e.\ the point with label $p_i$ in $f(\alpha')$ is the image under $f$ of the pin $p_i$. Our approach is to find pairs of points that are adjacent by position or value and lie together in a single cell in both griddings, then apply Lemma~\ref{lem-pin-pairs} to find another pair with this property, and repeat the process. Note that in what follows, it does not particularly matter precisely what the map $f$ is: we will simply be exploiting particular features of $f(\M^k)$, for example $G_{f(\M_k)}$ is a path of length $k$ whose internal vertices are labelled by monotone classes and whose end vertices are $\oplus 21$ or $\ominus 12$.

Suppose that $f(\alpha')$ has two $f(\M^k)$-griddings, the first being the gridding inherited from the original gridding of $\alpha'$ under the grid mapping $f$, and the second some other gridding. Since $f(\alpha')$ contains at least $2(k+1)^2+1$ points, the second gridding has some cell that contains at least $2(k+1)+1$ points. Of these points, at least three must lie together in some cell in the first gridding, and so we can identify three --- $p_h$, $p_i$ and $p_j$ with $h<i<j$ --- which in each gridding lie in a common cell. (Note that these two common cells do not yet need to correspond to the same cell of $f(\M^k)$.) In each gridding, since $\rect(p_h,p_i,p_j)$ is necessarily contained within the cell, so any other point inside $\rect(p_h,p_i,p_j)$ must lie in the cell. Thus, by shrinking the rectangle and relabelling if necessary, we can assume that $\rect(p_h,p_i,p_j)$ contains only $p_h$, $p_i$ and $p_j$. Our first aim is to show that the griddings ``line up'': i.e.\ $p_h$, $p_i$ and $p_j$ and any other points we find lie in the same cell in both griddings.

We begin by supposing that $p_h$, $p_i$ and $p_j$ lie in a non-monotone cell in the first gridding. Up to grid mappings, we have a situation such as the one depicted in Figure~\ref{fig-antichain-end-cell} (note that other arrangements are possible). As $p_h$, $p_i$ and $p_j$ cannot form a monotone sequence, they also lie in a non-monotone cell in the second gridding. For any such set of three points, observe that one of $(p_h, p_i)$ or $(p_i, p_j)$ are a pair of consecutive pins (i.e.\ $h=i-1$ or $i=j-1$) and hence adjacent by position or by value --- without loss we will suppose $i=j-1$ and that $p_i$ and $p_j$ are adjacent by position. Note that this also excludes the possibility that either of $p_i$ or $p_j$ is $p_1$ or $p_n$, and so we may apply Lemma~\ref{lem-pin-pairs}. This yields a pair $(p_{i-1},p_{j+1})$, which in the first gridding lie in the only other non-empty cell in this row. In the second gridding, $(p_{i-1},p_{j+1})$ must also lie together and in a different cell from the one containing $p_h$, $p_i$ and $p_j$, to avoid creating a forbidden pattern. We now apply Lemma~\ref{lem-pin-pairs} again, this time to the pair $(p_{i-1},p_{j+1})$ which are adjacent by value, yielding another pair $(p_{i-2},p_{j+2})$ adjacent by position. In both griddings, $(p_{i-2},p_{j+2})$ must occupy the only other non-empty cell in the same column as $(p_{i-1},p_{j+1})$. Repeating this process, we follow points around both griddings until we reach the other non-monotone cell. At this point, all the pins we have listed so far are forced to lie in the same cells in both griddings.

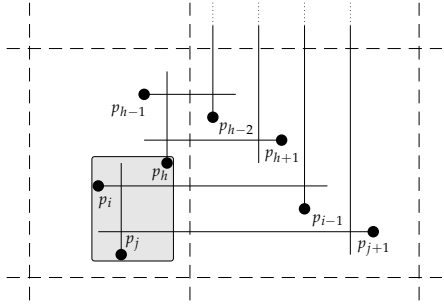
\begin{figure}
\begin{center}
\psset{xunit=0.012in, yunit=0.012in}
\psset{linewidth=0.005in}
\begin{pspicture}(0,0)(190,130)
{\psset{fillstyle=solid,cornersize=absolute,linearc=1pt,linewidth=0.005in,linestyle=solid,linecolor=black}
  \psframe[fillcolor=lightgray](37,17)(73,63)}
\pscircle*(40,50){0.03in}
\pscircle*(50,20){0.03in}
\pscircle*(60,90){0.03in}
\pscircle*(70,60){0.03in}
\pscircle*(90,80){0.03in}
\pscircle*(120,70){0.03in}
\pscircle*(130,40){0.03in}
\pscircle*(160,30){0.03in}
{\tiny
\rput[c](43,43){$p_{i}$}\rput[c](55,25){$p_{j}$}
\rput[c](53,83){$p_{h-1}$}\rput[c](67,55){$p_{h}$}
\rput[c](100,75){$p_{h-2}$}\rput[c](120,63){$p_{h+1}$}
\rput[c](140,35){$p_{i-1}$}\rput[c](160,23){$p_{j+1}$}}
\psline[linestyle=dashed](10,0)(10,130)
\psline[linestyle=dashed](80,0)(80,130)
\psline[linestyle=dashed](180,0)(180,130)
\psline[linestyle=dashed](0,10)(190,10)
\psline[linestyle=dashed](0,110)(190,110)
\psline(90,80)(90,120)
\psline(60,90)(100,90)
\psline(70,60)(70,100)
\psline(120,70)(60,70)
\psline(110,60)(110,120)
\psline(130,40)(130,120)
\psline(140,50)(40,50)
\psline(50,60)(50,20)
\psline(40,30)(160,30)
\psline(150,20)(150,120)
\psline[linestyle=dotted,dotsep=1pt](90,120)(90,130)
\psline[linestyle=dotted,dotsep=1pt](110,120)(110,130)
\psline[linestyle=dotted,dotsep=1pt](130,120)(130,130)
\psline[linestyle=dotted,dotsep=1pt](150,120)(150,130)
\end{pspicture}
\end{center}
\caption{Finding points adjacent by position or by value in the end cell for the proof of Lemma~\ref{lem-antichain-uniquely-griddable}.}
\label{fig-antichain-end-cell}
\end{figure}

In the case where $p_h$, $p_i$ and $p_j$ lie in a monotone cell in the first gridding, we observe that $p_h$ and $p_i$ are adjacent by position or value, and $p_i$ and $p_j$ are, respectively, adjacent by value or position. We now repeatedly apply Lemma~\ref{lem-pin-pairs} to both of these sets of pairs as before. In the first gridding, one pair will produce a sequence that finishes at one of the non-monotone cells, and the other pair will generate a sequence to reach the other. In the second gridding, the positions of all of these points must be the same: first, the cell containing $p_h$, $p_i$ and $p_j$ is monotone, as otherwise it is the unique cell in its row or column, and so must also contain both points from one of the pairs $(p_{h-1},p_{i+1})$ or $(p_{i-1},p_{j+1})$ which would produce a forbidden pattern. Thus this cell is monotone, and so the pairs $(p_{h-1},p_{i+1})$ and $(p_{i-1},p_{j+1})$ must (in some order) lie in the only other non-empty cells in its row and column. A similar argument applies to the cells containing each pair in turn, and hence each of these pairs lies in the same cell in both griddings.

In either of the above cases for the position of $p_h$, $p_i$ and $p_j$, the cells of both griddings now line up, and we have a sequence of pairs of points connecting one non-monotone cell to the other. All that remains is to show how to ``turn around'' in the non-monotone cells, as we can then follow sequences of pairs of points between the two end cells until we have considered every point.
Suppose, therefore, that we have reached a pair of non-consecutive points labelled $p_k$ and $p_\ell$ with $k<\ell-1$ in a non-monotone cell. Without loss we assume that $p_k$ and $p_\ell$ are adjacent by position, so that in both griddings the non-monotone cell is the only non-empty cell in its row, and the previous pair $(p_{k+1},p_{\ell-1})$ lie together in the only other nonempty cell in its column. Unless $k=1$ or $\ell=n$ (which we will consider shortly), we can apply Lemma~\ref{lem-pin-pairs} again to find pins $p_{k-1}$ and $p_{\ell+1}$ separating $p_k$ and $p_\ell$ by value. Reading from left to right, they appear in the order $p_{k-1}p_{k}p_{\ell}p_{\ell+1}$ or its reverse, so all four points must lie in the same cell in both griddings. We now have two pairs of points which are adjacent by value, namely $(p_{k-1},p_k)$ and $(p_\ell,p_{\ell+1})$, to which we can again repeatedly apply Lemma~\ref{lem-pin-pairs}. We follow both of these sequences until we reach the other non-monotone cell, at which point we can again ``turn around''. Note, however, that we only need to follow pairs which give rise to pins that have not yet been seen.

Finally, when we encounter a pair $(p_k,p_\ell)$ in a non-monotone cell with $k=1$ (or, by a similar argument, $\ell=n$), we find that they are separated by exactly one point, namely $p_{\ell+1}$. Now the pair $(p_\ell, p_{\ell+1})$ is adjacent by position or value, so we can use Lemma~\ref{lem-pin-pairs} on it as before. This then generates a sequence which we can follow back and forth until we reach the pin $p_n$, whence we have covered every point of the sequence. 

Thus all points $p_1,\ldots,p_n$ must be placed in the same cells in both griddings, as required. The extension to $f(\alpha)$ is trivial: when we encountered pin $p_1$ or $p_n$ in the above argument, we now encounter two points, both of which have their cell placements forced.
\end{proof}

\begin{lemma}\label{lem-ak-antichain}The set of permutations of length at least $2(k+1)^2+3$ in $A^k$ is an antichain with respect to permutation containment.\end{lemma}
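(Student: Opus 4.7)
The strategy uses the unique griddability from Lemma~\ref{lem-antichain-uniquely-griddable} together with the rigidity of pin sequences. Suppose for contradiction that distinct $\alpha, \beta \in A^k$ of length at least $2(k+1)^2 + 3$ satisfy $\alpha \leq \beta$; since the length of $\alpha_i \in A^k$ strictly increases in $i$, we may assume $|\alpha| < |\beta|$. By Lemma~\ref{lem-antichain-uniquely-griddable} both are strongly uniquely $\M^k$-griddable, so Lemma~\ref{lem-m-gridding-containment} lifts the containment to $\alpha \leq_{\M^k} \beta$ with both permutations carrying their unique griddings. Let $\phi$ denote the induced embedding, and label the pins of $\alpha$ as $p_1, \ldots, p_n$ and those of $\beta$ as $q_1, \ldots, q_m$, with the endpoint pins inflated.

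The heart of the proof is to show by induction on $r$ that $\phi(p_r) = q_r$ for every $r$. For the inductive step, given $\phi(p_r) = q_r$, the consecutive pair $p_r, p_{r+1}$ lies in specific cells of $\M^k$ with a specific geometric relationship (adjacent by position or by value), and this relationship must be inherited by $\phi(p_r)$ and $\phi(p_{r+1})$ in $\beta$. Applying Lemma~\ref{lem-pin-pairs} in $\beta$, together with the pin sequence conditions (local separation, local externality, row-column agreement, and non-interaction), then forces $\phi(p_{r+1}) = q_{r+1}$. Once $r = n$ we reach a contradiction: the inflated endpoint $p_n[\beta]$ maps onto the image of $q_n$, but since $n < m$ there are further pins $q_{n+1}, \ldots, q_m$ in $\beta$ whose positions are determined by the pin sequence rules, and these are incompatible with $\phi(p_n)$ marking the end of an embedded pin sequence.

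The main obstacle is the base case: showing that $\phi$ must send the initial inflated pair $p_1[21]$ to the initial pair $q_1[21]$ in $\beta$'s $\C$ cell, rather than to one of the turnaround pairs that also contribute $21$-blocks to the $\oplus 21$-structured $\C$ cell of $\beta$. The key distinction is that $q_1$ is the unique pin of $\beta$ without a predecessor in the pin sequence: every turnaround pair in $\C$ is both preceded and succeeded by pins in the adjacent cell, corresponding respectively to the tail of the return trip from the $\D^\pm$ end and to the head of the next outgoing trip. Were $\phi(p_1[21])$ to land on a turnaround pair, the image of $p_2$ would be forced to coincide with the outgoing pin immediately following that turnaround, and propagating constraints via Lemma~\ref{lem-pin-pairs} in the style of Lemma~\ref{lem-antichain-uniquely-griddable} then yields a geometric inconsistency with the placement of the predecessor pin in the adjacent cell that has no preimage in $\alpha$.
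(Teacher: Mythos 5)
Your overall skeleton matches the paper's (use the unique griddings to upgrade $\alpha\leq\beta$ to gridded containment, argue that pins must map to pins with the same index, and derive a contradiction at the ends of the sequence), but both of the steps that carry the real weight are flawed as you state them. The base case is the serious problem. If $p_1[21]$ were mapped to a turnaround pair of $\beta$ in the $\C$ cell, the image of $p_2$ is \emph{not} forced to be the outgoing pin immediately following that turnaround: that pin separates the two points of the turnaround pair by value, so it lies strictly between them, whereas the image of $p_2$ must lie beyond \emph{both} points of the image of $p_1[21]$ on the origin side (in $\alpha$, $p_2$ separates $p_1$ from $p_0$, hence sits above/beyond the whole inflated block). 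So your claimed forcing is false, and the contradiction you then draw from ``the predecessor pin in the adjacent cell that has no preimage in $\alpha$'' is not a contradiction at all --- an embedding is under no obligation to hit every pin of $\beta$, so unmatched pins of $\beta$ cannot by themselves produce an inconsistency. The correct forcing goes the other way: the image of $p_2$ must be a pin from an \emph{earlier} circuit (one of the boundedly many pins lying between the chosen turnaround pair and the origin), and the contradiction is obtained by chasing the images of $p_2,p_3,\ldots$ around one full circuit of the spiral, concluding that the next pin of $\alpha$ in the $\C$ cell would have to map to a pin of index at most that of the turnaround pair, hence on the wrong side (above and to the right) of the image of $p_1[21]$, while in $\alpha$ it lies below and to the left of $p_1[21]$.

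The terminal step is also mis-attributed. The contradiction is not that $\beta$ has further pins $q_{n+1},\ldots,q_m$ ``incompatible with the end of an embedded pin sequence''; extra points of $\beta$ are harmless. The point is that the last pin of $\alpha$ is inflated, so $\alpha$ contains \emph{two} points separating $\rect(p_{n-2},p_{n-1})$, whereas in $\beta$ (where $q_n$ is not inflated, precisely because $\beta$ is longer) the non-interaction condition guarantees that $q_n$ is the \emph{only} point separating $\rect(q_{n-2},q_{n-1})$; once the images of $p_{n-2},p_{n-1}$ are pinned down to $q_{n-2},q_{n-1}$, there is no room for both points of the inflated endpoint. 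You need to make this separation count explicitly; as written, the argument does not close. (A minor notational point: you reuse $\beta$ both for the larger permutation and for the inflating pattern of the last pin.)
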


\begin{proof}
Let $\alpha,\beta$ be two permutations in $A^k$ of lengths $m$ and $n$ respectively, both of length at least $2(k+1)^2+3$. Assuming $m<n$, suppose for a contradiction that $\alpha\leq\beta$, and fix one such embedding. Since both $\alpha$ and $\beta$ have unique $\M^k$-griddings, this implies not only that $\alpha\leq_{\M^k}\beta$, but that our fixed embedding witnesses this gridded containment. By their construction, we can write $\alpha$ and $\beta$ as inflated grid pin permutations, thus $\alpha=p_1[21],p_2,\ldots,p_m[\gamma]$ and $\beta=q_1[21],q_2,\ldots,q_n[\gamma]$, where $\gamma=12$ or $21$ depending on the parity of $k$. Moreover, since the griddings must match up, the fictive pin $p_0$ is placed in exactly the same position as $q_0$, and so we will assume that $p_0$ is mapped to $q_0$.

We claim that the inflated pin $p_1[21]$ must be mapped to $q_1[21]$. If not, then $p_1[21]$ must be mapped to two consecutive pins in the same cell, this being the only other way to form a 21 pattern in the cell labelled by $\C$. Thus suppose $p_1[21]$ is mapped to the pins $q_{2ki}$ and $q_{2ki+1}$. Then the left pin $p_2$ must be mapped to some left pin with index at most $2kj+2$, where $j<i$, since all later pins do not separate $q_{2ki}$ from $q_0$. Next, by a similar argument, $p_3$ can be mapped to a pin in $\beta$ with index at most $2kj+3$, and so on, until we find that pin $p_{2k}$ (which is the next pin of $\alpha$ we encounter in the cell labelled by $\C$) must be mapped to a pin with index at most $2kj+2k\leq 2ki$. This, however, is impossible because $p_{2k}$ must lie below and to the left of $p_1[21]$, but yet in $\beta$ its image cannot.

Now, since $p_0$ and $q_0$, and $p_1[21]$ and $q_1[21]$ coincide, by the properties of grid pin sequences we conclude that $p_2$ must be mapped to $q_2$, $p_3$ to $q_3$, and so on. This, however, becomes impossible when we try to map $p_m[\gamma]$ into $q_m$: by non-interaction, there are no pins other than $q_m$ in $\beta$ that separate $\rect(q_{m-2},q_{m-1})$, but yet we need two in $\alpha$ to separate $\rect(p_{m-2},p_{m-1})$.
\end{proof}

Thus we have:

\begin{proof}[Proof of Theorem~\ref{thm-not-pwo}] First note that if $G_\M$ contains a cycle then it contains a non-partially well-ordered class by Theorem~\ref{thm-murphy-vatter}, so we may assume that $G_\M$ is acyclic and has a component with at least two non-monotone griddable entries. By Lemma~\ref{lem-down-to-f}, it suffices to prove that $\grid(\M)$ is not partially well-ordered for a gridding matrix $\M$ belonging to the family of matrices $\F$, since every other grid class that needs to be considered contains such a class. 

By Lemmas~\ref{lem-antichain-uniquely-griddable} and~\ref{lem-ak-antichain}, $A^k$ contains an infinite antichain of strongly uniquely $\M^k$-griddable permutations. By Theorem~\ref{thm-matrix-symmetry}, $\M$ can be obtained from one of the matrices $\M^k$ for some $k$ via a grid mapping, and so $\grid(\M)$ is not partially well-ordered by Theorem~\ref{thm-strongly-uniquely-not-pwo}.
\end{proof}

The proof of Theorem~\ref{thm-fin-simple-grid} now follows by combining Theorems~\ref{thm-not-pwo} and~\ref{thm-grid-pwo}.

\section{Concluding Remarks}\label{sec-conclusion}

\paragraph{Monotone griddable classes.} Theorem~\ref{thm-fin-simple-grid} cannot immediately be extended by replacing ``monotone classes'' with ``monotone gridabble classes''. For example, if $\M=\left( \C\hspace{6pt}\D\right)$ where $\displaystyle\C=\binom{\av{12}}{\av{21}}$ and $\displaystyle\D=\binom{\av{21}}{\av{12}}$, then $\grid(\M)$ contains $\displaystyle\grid \binom{\av{12}\hspace{6pt}\av{21}}{\av{21}\hspace{6pt}\av{12}}$ which is not partially well-ordered by Theorem~\ref{thm-murphy-vatter}. (Note also that both $\C$ and $\D$ contain only finitely many simple permutations, so adding this restriction would not help.) However, Theorem~\ref{thm-fin-simple-grid} can be used indirectly for such gridding matrices by refining the gridding until all cells are monotone or non-monotone griddable --- the details of such a refinement are beyond the scope of this paper, but see Vatter~\cite{vatter:small-permutat:} for more details on griddability.

\paragraph{Grid pin sequences and antichains.} Currently, every known infinite antichain in the permutation containment order can be built, via grid symmetries, from an infinite grid pin sequence. Note, however, that not every known infinite antichain is constructed simply by inflating the first and last points of a grid pin sequence: see Murphy's thesis~\cite{murphy:restricted-perm:} for some other ``anchoring'' constructions. This naturally leads one to wonder whether there are infinite antichains that cannot be formed in this way. The \emph{closure} of a set $A$ of permutations is the class of permutations contained in the permutations of $A$, $\cl{A}=\{\pi : \pi\leq \alpha\textrm{ for some }\alpha\in A\}$.

\begin{question} Does there exist an infinite antichain $A$ for which $\cl{A}$ does not contain arbitrarily long grid pin sequences?\end{question}

\paragraph{Partial well-order decidability.} A first step in answering more general questions of decidability could be to consider the following question.

\begin{question}Is it decidable whether a given gridding matrix whose entries are partially well-ordered permutation classes defines a grid class that is partially well-ordered or not?\end{question}

Theorems~\ref{thm-fin-simple-grid} and~\ref{thm-not-pwo} make some progress towards answering this, particularly in the extension to monotone griddable classes discussed earlier. However, a complete answer would also need to consider gridding matrices where each component is a tree with entries given by monotone classes except for one cell, which is labelled by a non-monotone-griddable class with arbitrarily long simple permutations. This situation is currently amenable neither to Higman's Theorem nor grid pin sequences.

\paragraph{Acknowledgements.} The author wishes to thank Vince Vatter and Michael Albert for helpful discussions, and to the two referees whose comments greatly improved the presentation of these results.



\bibliographystyle{acm}
\bibliography{../refs}

\end{document}